\renewcommand{\d}{\,{\rm d}} 
\DeclareMathOperator{\diam}{diam} 
\DeclareMathOperator{\supp}{supp}
\newcommand{\bs}[1]{\boldsymbol{#1}}
\newcommand{\const}[1]{C_{\text{\rm#1}}}
\newcommand{\set}[2]{\big\{#1\,:\,#2\big\}}
\newcommand{\dual}[3][]{#1(#2\,,\,#3#1)}
\newcommand{\norm}[3][]{#1\|#2#1\|_{#3}}
\newcommand\refine{{\tt refine}}
\newcommand\A{\mathbb{A}}
\newcommand\N{\mathbb{N}}
\newcommand\R{\mathbb{R}}
\newcommand\T{\mathbb{T}}
\newcommand{\NN}{\mathcal{N}}
\newcommand{\PP}{\mathcal{P}}
\newcommand{\RR}{\mathcal{R}}
\renewcommand{\SS}{\mathcal{S}}
\newcommand{\UU}{\mathcal{U}}
\newcommand{\VV}{\mathcal{V}}
\newcommand{\jump}[1]{[#1]}
\newcommand{\level}{{\tt level}}
\newcommand\push{{\tt push}}
\newcommand{\kTp}{\kappa_T^+}
\newcommand{\kTm}{\kappa_T^-}
\newcommand{\kTTp}{\kappa_{T'}^+}
\newcommand\MM{\mathcal M}
\newcommand\TT{\mathcal T}
\newcommand{\SlopeTriangle}[6]
{

    \pgfplotsextra
    {
        \pgfkeysgetvalue{/pgfplots/xmin}{\xmin}
        \pgfkeysgetvalue{/pgfplots/xmax}{\xmax}
        \pgfkeysgetvalue{/pgfplots/ymin}{\ymin}
        \pgfkeysgetvalue{/pgfplots/ymax}{\ymax}

        \pgfmathsetmacro{\xArel}{#1}
        \pgfmathsetmacro{\yArel}{#3}
        \pgfmathsetmacro{\xBrel}{#1-#2}
        \pgfmathsetmacro{\yBrel}{\yArel}
        \pgfmathsetmacro{\xCrel}{\xArel}

        \pgfmathsetmacro{\lnxB}{\xmin*(1-(#1-#2))+\xmax*(#1-#2)} 
        \pgfmathsetmacro{\lnxA}{\xmin*(1-#1)+\xmax*#1} 
        \pgfmathsetmacro{\lnyA}{\ymin*(1-#3)+\ymax*#3} 
        \pgfmathsetmacro{\lnyC}{\lnyA+#4*(\lnxA-\lnxB)}
        \pgfmathsetmacro{\yCrel}{\lnyC-\ymin)/(\ymax-\ymin)} 

        \coordinate (A) at (rel axis cs:\xArel,\yArel);
        \coordinate (B) at (rel axis cs:\xBrel,\yBrel);
        \coordinate (C) at (rel axis cs:\xCrel,\yCrel);

        \draw[#6]   (A)-- node[anchor=north] {#5}
                    (B)--
                    (C)--
                    cycle;
    }
}
\crefname{hypothesis}{Hypothesis}{Hypotheses}
\crefname{fact}{Fact}{Facts}
\title{Optimal convergence rates \\ of an adaptive finite element method \\ for unbounded domains%
\thanks{GG acknowledges funding by the Deutsche Forschungsgemeinschaft (DFG, German Research Foundation) under Germany's Excellence Strategy -- EXC-2047/1 -- 390685813 and the individual research grant 545527047.
TCF acknowledges funding by the ANR JCJC project APOWA (research grant ANR-23-CE40-0019-01).
}}
\author{Th\'eophile Chaumont-Frelet\thanks{Inria Univ.~Lille and Laboratoire Paul Painlev\'e, 59655 Villeneuve-d’Ascq, France
  (\email{theophile.chaumont@inria.fr}).} 
\and Gregor Gantner\thanks{Institute for Numerical Simulation, University of Bonn, Friedrich-Hirzebruch-Allee~7, 53115 Bonn, Germany
  (\email{gantner@ins-uni.bonn.de}).}}
\newtheorem{algo}[theorem]{{\sc Algorithm}}
\begin{document}

\maketitle

\begin{abstract}
We consider linear reaction-diffusion equations posed on unbounded domains,
and discretized by adaptive Lagrange finite elements. To obtain finite-dimensional
spaces, it is necessary to introduce a truncation boundary, whereby only a
bounded computational subdomain is meshed, leading to an approximation of
the solution by zero in the remainder of the domain. We propose a residual-based
error estimator that accounts for both the standard discretization error as
well as the effect of the truncation boundary. This estimator is shown to
be reliable and efficient under appropriate assumptions on the triangulation.
Based on this estimator, we devise an adaptive algorithm that automatically
refines the mesh and pushes the truncation boundary towards infinity. We prove
that this algorithm converges and even achieves optimal rates in terms
of the number of degrees of freedom. We finally provide numerical examples
illustrating our key theoretical findings.
\end{abstract}

\begin{keywords}
unbounded domains, reaction-diffusion equation, finite element method,
a posteriori error estimation, adaptive mesh refinement, optimal convergence rates
\end{keywords}

\begin{MSCcodes}
65N12, 65N30, 65N50
\end{MSCcodes}

\section{Introduction}

We consider second-order PDE problems posed on unbounded domains
and their discretization by finite elements methods. We are in particular
interested in cases where the boundary is unbounded and where the
PDE involves heterogeneous coefficients that are not necessarily
constant outside a compact subset. This setting renders (coupling
with) boundary element methods, as considered in~\cite{affkmp13,feischl22,gr25,kq11,rt19,sayas09},
highly challenging, and we therefore resort to a domain truncation approach.

Specifically, we fix a domain $\Omega\subseteq\R^d$ with Lipschitz boundary
$\partial\Omega$. Throughout, we suppose that $\Omega$ is a polytop in the sense that
it can be covered by a (potentially infinite) conforming triangulation $\widehat \TT_0$
of compact simplices. Let $\kappa \in L^\infty(\Omega)$ be a given reaction term which is
uniformly positive. For $f\in L^2(\Omega)$, we consider the reaction-diffusion equation
\begin{align}
\label{eq_rd_intro}
\begin{split}
	\kappa^2 u - \Delta u &= f \quad \text{in }\Omega,
	\\
	u &= 0 \quad \text{on }\partial\Omega.
\end{split}
\end{align}
The Lax--Milgram lemma guarantees unique existence of a weak solution $u\in H_0^1(\Omega)$ characterized by 
\begin{align}
\label{eq_rd_weak_intro}
	a(u,v) := \int_\Omega \kappa^2 u v + \nabla u \cdot \nabla v \d x = \int_\Omega f v \d x 
	\quad \forall v \in H_0^1(\Omega).
\end{align}
As mentioned earlier, we stress that neither the domain $\Omega$
nor its boundary $\partial\Omega$ is assumed to be bounded.
For simplicity, we restrict our attention to the case where the diffusion matrix is the identity
and no advection term is present, but more general situations can be covered without difficulty.

Specifically, the developed theory applies to general second-order PDEs with symmetric
uniformly positive definite diffusion matrix in $L^\infty(\Omega)$ with piecewise $W^{1,\infty}$
regularity
(this is required to define the residual-based estimator) and lower-order terms
with coefficients in $L^\infty(\Omega)$. In fact, the only key assumption
is that the resulting bilinear form is coercive with respect to the
\emph{full} $H^1$-norm
$\norm{v}{H_{\alpha}^1(\Omega)} := (\alpha^2\norm{v}{\Omega}^2 + \norm{\nabla v}{\Omega}^2)^{1/2}$,
$v\in H_0^1(\Omega)$, for some $\alpha > 0$. Note that this norm is \emph{not} equivalent to
the (semi-)norm $\norm{\nabla v}{\Omega}$ on unbounded domains $\Omega$. 
The only noteworthy modification in this case
arises in the proof of some general quasi-orthogonality when advection terms are present, 
as these lead to a non-symmetric bilinear form  $a(\cdot,\cdot)$, and the Pythagoras theorem
can therefore no longer be applied; see Remark~\ref{rem:nonsymmetric} for details and a
workaround based on the analysis of \cite{feischl22}.

To approximate the exact weak solution $u\in H_0^1(\Omega)$, we employ continuous piecewise polynomials of degree $p\in\N$ with vanishing trace on finite triangulations: 
Let $\widehat\TT_H$ be a (possibly infinite) conforming triangulation of $\Omega$ and $\TT_H\subset \widehat \TT_H$ a finite subtriangulation with associated set $\Omega_H := {\rm int}(\bigcup\TT_H) \subset \Omega$, where ${\rm int}(\cdot)$ denotes the interior of a set.
Note that we do not assume that $\Omega_H$ is a Lipschitz domain. 
We denote by $\Gamma_H := \partial\Omega_H \setminus \partial\Omega$ the artificial boundary introduced by the discretization.
Identifying functions on $\Omega_H$ with functions on $\Omega$ via extension by zero, we have the inclusion 
\begin{align}
	\SS_0^p(\TT_H) := \set{v_H \in C^0(\overline\Omega_H)}{ v_H|_T \in \PP^p(T) \,\,\, \forall T \in \TT_H, v_H|_{\partial\Omega_H} = 0}
	\subseteq H_0^1(\Omega). 
\end{align}
Again, the Lax--Milgram lemma applies and guarantees unique existence of a discrete Galerkin approximation $u_H \in \SS_0^p(\TT_H)$ with
\begin{align}
\label{eq_fem_intro}
	a(u_H,v_H) = \int_{\Omega_H} f v_H \d x 
	\quad \forall v_H \in \SS_0^p(\TT_H). 
\end{align}

The goals of the present contribution are twofold. Specifically, we aim to (i)
rigorously estimate the discretization error incurred in~\eqref{eq_fem_intro},
including the effect of domain truncation, by means of an error estimator, and (ii) employ
this error estimator to steer adaptive mesh refinement and domain truncation.

Our first contribution is to show that the standard residual-based error indicators
\begin{equation*}
\eta_H^2(T)
:=
h_T^2 \norm{f - \kappa^2 u_H + \Delta u_H}{T}^2
+
h_T \norm{\jump{\partial_{\bs{n}} u_H}}{\partial T \cap \Omega}^2
\quad
\forall T \in \TT_H
\end{equation*}
is well suited to capture the discretization error, including the effect of domain truncation.
Note that the normal jumps across the artificial boundary are incorporated into the estimator.

In Theorem~\ref{thm:efficiency}, by slightly adapting standard proofs, we show
that this estimator is (locally) efficient under the requirement that
$\kappa_T^+ h_T$ is uniformly bounded for all $T\in\widehat\TT_H$,
where $\kappa_T^+$ is the essential supremum of $\kappa$
on $T$ and $h_T:=|T|^{1/d}$ is the element size of $T$.
This requirement is sharp for the standard residual-based estimator,
even in bounded domains. In that case, it can be relaxed
by modifying the estimator as in~\cite{verfurth_1998a}, but we refrain
from doing so here for simplicity.

We further prove in Theorem~\ref{thm:reliability} that the estimator is reliable under
the additional condition that $\kappa_T^- h_T$ is uniformly bounded from below
for elements $T \in \TT_H$ touching the truncation boundary $\Gamma_H$, where 
$\kappa_T^-$ is the essential infimum of $\kappa$ over $T$. This
is inspired by ideas recently introduced by one of the authors in~\cite{chaumontfrelet_2025b},
where an estimator based on local flux equilibration is proposed. As discussed
in~\cite{chaumontfrelet_2025b}, the requirement that elements are not refined
at the truncation boundary is natural, and can be viewed as an extended shape-regularity
requirement on the mesh.

The second key contribution of this work is to employ the error estimator in
an adaptive loop based on the standard D\"orfler's marking and a suitable \emph{ad hoc}
refinement strategy that extend the truncated domain when necessary. We show that
sequence of approximate solutions produced by this adaptive algorithm converges R-linearly 
and that optimal rates are achieved in suitable approximation classes. The
proofs rely on the axioms of adaptivity introduced in~\cite{cfpp14}.

Our abstract analysis is complemented by numerical results.
These examples precisely match the theoretical predictions,
whereby we indeed observe optimal convergence with expected
rates.

The remainder of this work is organized as follows.
In Section~\ref{sec:aposteriori}, we introduce the estimator and establish its key properties.
Section~\ref{sec:refinement} extends standard results on mesh refinement in bounded domains
to the unbounded case. In Section~\ref{sec:adaptivity}, we present the adaptive algorithm
and state our main convergence results, whose proofs are given in Sections~\ref{sec:axioms}
and~\ref{sec:proofs}. Finally, numerical results are presented in Section~\ref{sec:numerics}.

\section{A posteriori error estimation}\label{sec:aposteriori}

Let $\widehat\TT_H$ be a conforming triangulation of $\Omega$ that is uniformly shape regular, i.e., there exists a constant $\const{sha}>0$ such that 
\begin{align}\label{eq:shape_regularity}
	\diam(T)^d \le \const{sha} |T| \quad \forall T\in\widehat\TT_H,
\end{align}
Moreover, let $\TT_H\subset \widehat\TT_H$ be a finite subtriangulation. 
With the element size $h_T = |T|^{1/d}$ for all $T\in\widehat\TT_H$, we can then define the refinement indicators
\begin{subequations}\label{eq:estimator}
\begin{align}
\begin{split}
	\eta_H(T, v_H)^2 := h_T^2 \norm{f - \kappa^2 v_H + \Delta v_H}{T}^2 + h_T \norm{\jump{\partial_{\bs{n}} v_H}}{\partial T \cap \Omega}^2
	\quad \forall T \in \TT_H, v_H \in \SS^p_0(\TT_H),
\end{split}
\end{align}
where, as usual, $\jump{\cdot}$ denotes the jump of across element interfaces in $\widehat\TT_H$ and $\partial_{\bs{n}}$ is the normal derivative. 
Based on these local contributions, we define
\begin{align}
	\eta_H(v_H) := \eta_H(\TT_H,v_H) \quad \text{with} \quad\eta_H(\UU_H, v_H)^2 
	:= \sum_{T \in \UU_H} \eta_H(T, v_H)^2 
	\quad \forall \UU_H \subseteq \TT_H
\end{align}
as well as the weighted-residual a posteriori error estimator
\begin{align}
	\eta_H := \eta_H(\TT_H)
	\quad \text{with} \quad 
	\eta_H(\UU_H) := \eta_H(\UU_H, u_H)
	\quad \forall \UU_H \subseteq \TT_H.
\end{align}
\end{subequations}

The following theorem provides an upper bound for the discretization error measured
in the energy norm
\begin{align}
\norm{v}{H^1_\kappa(\Omega)}^2 := a(v,v) = \norm{\kappa v}{\Omega}^2 + \norm{\nabla v}{\Omega}^2 
\quad \forall v\in H_0^1(\Omega)
\end{align}
in terms of the estimator $\eta_H$.

In the proof, more precisely in Step~2 and later in \emph{discrete} reliability
(Lemma~\ref{lem:discrete_reliability}), a key challenge associated with the (potential)
unboundedness of $\Omega$ arises. It turns out that elements touching the artificial boundary
$\Gamma_H$ are required be ``large''. This observation is in line with results
previously obtained for an equilibrated-flux estimator in~\cite{chaumontfrelet_2025b}.
As argued there, requiring these elements to be ``large'' is not a significant restriction,
and can be viewed as a generalized shape-regularity assumption.
In Section~\ref{sec:refinement} of the present manuscript, we will introduce a mesh
refinement strategy that guarantees this property.

\begin{theorem}[reliability]\label{thm:reliability}
There exists a constant $\const{rel}>0$ which depends only on the dimension $d$,
shape regularity of $\widehat\TT_H$, and $\min\set{\kTm h_T}{T\in\TT_H, T\cap \Gamma_H \neq \emptyset}$
such that 
\begin{align}\label{eq:weak_reliability}
\begin{split}
a(u - u_H,v)
\le
\const{rel}\left ( \eta_H \norm{v}{H^1_\kappa(\Omega)} + \dual{f}{v}_{\Omega\setminus\Omega_H}\right ) \quad \forall v\in H_0^1(\Omega).
\end{split}
\end{align}
In particular, coercivity of $a(\cdot,\cdot)$ yields reliability
\begin{align}\label{eq:reliability}
\begin{split}
\norm{u - u_H}{H^1_\kappa(\Omega)}
\le
\const{rel}\left (
\eta_H + \sup_{v \in H_0^1(\Omega)\setminus\{0\}} \frac{|\dual{f}{v}_{\Omega\setminus\Omega_H}|}{\norm{v}{H^1_\kappa(\Omega)}}
\right ).
\end{split}
\end{align}
\end{theorem}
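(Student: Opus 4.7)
The plan is to combine Galerkin orthogonality with a Scott--Zhang quasi-interpolant that respects zero Dirichlet values on \emph{all} of $\partial\Omega_H$ (including $\Gamma_H$), integrate by parts element-wise on $\Omega_H$, and estimate the resulting interpolation errors carefully. The main subtlety is that the quasi-interpolant is forced to vanish on $\Gamma_H$ while $v \in H_0^1(\Omega)$ need not, and the hypothesis $\kTm h_T \gtrsim 1$ on elements touching $\Gamma_H$ is exactly what compensates the resulting loss of the usual $h_T$ factor. Concretely, fix $v\in H_0^1(\Omega)$ and let $v_H\in\SS_0^p(\TT_H)$ be a Scott--Zhang quasi-interpolant of $v$ with zero nodal values on $\partial\Omega_H$. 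Its extension by zero lies in $H_0^1(\Omega)$, so combining the weak formulations of $u$ and $u_H$ with $\int_\Omega fv_H = \int_{\Omega_H} fv_H$ yields Galerkin orthogonality $a(u-u_H,v_H)=0$. Hence $a(u-u_H,v)=a(u-u_H,v-v_H)$, and splitting $\int_\Omega f(v-v_H)=\int_{\Omega_H} f(v-v_H)+\int_{\Omega\setminus\Omega_H} fv$, using $u_H\equiv 0$ outside $\Omega_H$, and integrating by parts element-wise on $\Omega_H$ gives
\begin{equation*}
a(u-u_H,v) = \sum_{T\in\TT_H}\int_T (f-\kappa^2u_H+\Delta u_H)(v-v_H)\d x - \sum_F\int_F \jump{\partial_{\bs{n}}u_H}(v-v_H)\d s + \dual{f}{v}_{\Omega\setminus\Omega_H},
\end{equation*}
where the face sum runs over interior faces of $\TT_H$ together with faces on $\Gamma_H$ (on which the jump reduces to the one-sided normal derivative, matching the estimator definition).

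Applying Cauchy--Schwarz termwise, the heart of the argument is to control $h_T^{-1}\norm{v-v_H}{T}$ and $h_T^{-1/2}\norm{v-v_H}{F}$ by $\norm{v}{H^1_\kappa(\omega_T)}$, where $\omega_T,\omega_F$ denote the usual element/face patches. Away from $\Gamma_H$, standard Scott--Zhang theory delivers $\norm{v-v_H}{T}\lesssim h_T|v|_{H^1(\omega_T)}$ and $\norm{v-v_H}{F}\lesssim h_F^{1/2}|v|_{H^1(\omega_F)}$, which is more than enough. For elements $T$ touching $\Gamma_H$, $v$ need not vanish where $v_H$ is constrained to be zero, so only the $L^2$-stability $\norm{v-v_H}{T}\lesssim\norm{v}{\omega_T}$ and the scaled trace inequality $\norm{v-v_H}{F}=\norm{v}{F}\lesssim h_T^{-1/2}\norm{v}{T}+h_T^{1/2}|v|_{H^1(T)}$ remain available. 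At this point I would invoke $\kTm h_T\gtrsim 1$ to write $h_T^{-1}\norm{v}{T}\lesssim\kTm\norm{v}{T}\le\norm{\kappa v}{T}$, so that all contributions combine into the uniform bound
\begin{equation*}
h_T^{-1}\norm{v-v_H}{T} + h_T^{-1/2}\norm{v-v_H}{\partial T\cap\Omega} \lesssim \norm{v}{H^1_\kappa(\omega_T)} \quad \forall T\in\TT_H.
\end{equation*}

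A final Cauchy--Schwarz over $\TT_H$, together with the finite overlap of the patches $\omega_T$ (a consequence of the shape regularity of $\widehat\TT_H$), then yields \eqref{eq:weak_reliability}. For \eqref{eq:reliability}, I would specialize to $v=u-u_H\in H_0^1(\Omega)$, use coercivity $a(u-u_H,u-u_H)=\norm{u-u_H}{H^1_\kappa(\Omega)}^2$, bound $\dual{f}{u-u_H}_{\Omega\setminus\Omega_H}$ by the supremum appearing in \eqref{eq:reliability} times $\norm{u-u_H}{H^1_\kappa(\Omega)}$, and divide through. I expect the interpolation estimate near $\Gamma_H$ to be the main obstacle: without the large-element hypothesis, the Scott--Zhang interpolant loses an entire $h_T$ factor at the artificial boundary, and no reliability bound in the $H^1_\kappa$-norm can be expected; the assumption $\kTm h_T\gtrsim 1$ is precisely what absorbs this loss into the reactive part $\norm{\kappa v}{\Omega}$ of the energy norm.
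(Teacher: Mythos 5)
Your proposal is correct and follows essentially the same route as the paper: Galerkin orthogonality, extracting $\dual{f}{v}_{\Omega\setminus\Omega_H}$ from the $f$-integral, element-wise integration by parts producing residuals and jumps (including across $\Gamma_H$), a Scott--Zhang/Cl\'ement interpolant vanishing on all of $\partial\Omega_H$, and the key use of $\kTm h_T\gtrsim 1$ to convert $h_T^{-1}\norm{\cdot}{T}$ into $\norm{\kappa\,\cdot}{T}$ on elements touching the artificial boundary. The only small technical point the paper is more careful about (in the proof of its discrete-reliability lemma, where it constructs the interpolant) is working with patches taken in the full conforming triangulation $\widehat\TT_H$ rather than in $\TT_H$, so that the Poincar\'e--Friedrichs inequality is applied on domains that are guaranteed to be Lipschitz.
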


\begin{proof}
We prove the assertion in two steps.

\noindent
\textbf{Step~1:}
Galerkin orthogonality gives that 
\begin{align*}
	a(u - u_H,v) = a(u - u_H,v - v_H) \quad \forall v_H \in \SS_0^p(\TT_H).
\end{align*}
We split $a(u - u_H,v - v_H)$ as follows 
\begin{align*}
	&a(u - u_H,v - v_H) \\
	&\quad= \dual{f}{v}_{\Omega\setminus\Omega_H} + \dual{f}{v - v_H}_{\Omega_H} - \dual{\kappa^2 u_H}{v - v_H}_{\Omega_H} - \dual{\nabla u_H}{\nabla (v - v_H)}_{\Omega_H}.
\end{align*}
The first term $\dual{f}{v}_{\Omega\setminus\Omega_H}$ already appears in the reliability estimate~\eqref{eq:weak_reliability}. 
For the remaining terms, integration by parts shows that 
\begin{align*}
	&\dual{f}{v - v_H}_{\Omega_H} - \dual{\kappa^2 u_H}{v - v_H}_{\Omega_H} - \dual{\nabla u_H}{\nabla (v - v_H)}_{\Omega_H}
	\\
	&\quad = \sum_{T \in \TT_H} \dual{f}{v - v_H}_T - \dual{\kappa^2 u_H}{v - v_H}_T + \dual{\Delta u_H}{v - v_H}_T - \dual{\partial_{\bs{n}} u_H}{v - v_H}_{\partial T}
	\\
	&\quad = \sum_{T \in \TT_H} \dual{f - \kappa^2 u_H + \Delta u_H}{v - v_H}_T - \frac{1}{2} \dual{\jump{\partial_{\bs{n}} u_H}}{v - v_H}_{\partial T \cap \Omega_H} 
	\\
	&\qquad\qquad- \dual{\partial_{\bs{n}} u_H}{v - v_H}_{\partial T \cap \Gamma_H}. 
\end{align*}
In contrast to the standard case of bounded domains, the last term $\dual{\partial_{\bs{n}} u_H}{v - v_H}_{\partial T \cap \Gamma_H}$ does not necessarily vanish, as $v - v_H$ is in general only $0$ on $\partial\Omega$ but not on $\partial\Omega_H$. 
Note the identities $(\partial T\cap\Omega_H) \cup (\partial T \cap \Gamma_H) = \partial T \setminus \partial \Omega = \partial T \cap \Omega$ and  $\partial_{\bs{n}} u_H = \jump{\partial_{\bs{n}} u_H}$ on $\partial T\cap\Gamma_H$. 
We apply the Cauchy--Schwarz inequality to see that 
\begin{align*}
	&\sum_{T \in \TT_H} \dual{f - \kappa^2 u_H + \Delta u_H}{v - v_H}_T - \frac{1}{2} \dual{\jump{\partial_{\bs{n}} u_H}}{v - v_H}_{\partial T \cap \Omega_H} 
	\\
	&\qquad- \dual{\partial_{\bs{n}} u_H}{v - v_H}_{\partial T \cap \Gamma_H}
	\\
	&\quad \le \Big(\sum_{T \in \TT_H} h_T^2 \norm{f - \kappa^2 u_H + \Delta u_H}{T}^2 + h_T \norm{\jump{\partial_{\bs{n}} u_H}}{\partial T\cap \Omega}^2\Big)^{1/2}
	\\
	&\qquad \times \Big(\sum_{T\in\TT_H} h_T^{-2} \norm{v - v_H}{T}^2 + h_T^{-1} \norm{v - v_H}{\partial T \cap \Omega}^2 \Big)^{1/2}. 
\end{align*}

\noindent
\textbf{Step~2:}
We conclude the proof choosing $v_H := I_H v$ for some Cl\'ement-type interpolation operator $I_H: H_0^1(\Omega) \to \SS_0^1(\TT_H)$,
with the following local approximation property
\begin{align}\label{eq:local_approximation}
	h_T^{-2} \norm{(1 - I_H) w}T^2 + h_T^{-1} \norm{(1 - I_H) w}{\partial T\cap\Omega}^2
	\lesssim \norm{w}{H^1_\kappa(\widehat\omega_H[T])}^2
	\quad \forall w \in H_0^1(\Omega), T\in\TT_H,
\end{align}
where $\widehat\omega_H[T] := \bigcup \set{T'\in\widehat\TT_H}{T\cap T' \neq \emptyset}$ denotes the patch of $T$. 
A suitable candidate is found in the proof of \emph{discrete} reliability (Lemma~\ref{lem:discrete_reliability}) below. 
\end{proof}

\begin{remark}[reliability on full triangulation]\label{rem:extended_estimator}
Trivially, the supremum in \eqref{eq:reliability} can be estimated by
\begin{align*}
\sup_{v\in H_0^1(\Omega)\setminus\{0\}}
\frac{|\dual{f}{v}_{\Omega\setminus\Omega_H}|}{\norm{v}{H^1_\kappa(\Omega)}}
\le
\norm{\kappa^{-1}f}{\Omega\setminus\Omega_H}
=
\norm{\kappa^{-1}(f - \kappa^2 u_H + \Delta u_H)}{\Omega\setminus\Omega_H},
\end{align*}
where the last identity follows from the inclusion $\supp(u_H) \subseteq \overline\Omega_H$. 
Suppose there exist constants $\const{inf},\const{sup} > 0$
such that
\begin{align}\label{eq:infsup_h}
	\const{inf} \le \kTm h_T, \qquad \kTp h_T \le \const{sup} \quad \forall T\in\widehat\TT_H\setminus\TT_H.
\end{align}
Then, the definition of the indicators and the estimator in~\eqref{eq:estimator} can be readily extended to $T\in\widehat\TT_H$ and $\UU_H\subseteq\widehat\TT_H$, and the resulting estimator $\widehat\eta_H = \widehat\eta_H(\widehat\TT_H,u_H) < \infty$ is reliable, i.e., 
\begin{align*}
	\const{rel}^{-2} \norm{u - u_H}{H^1_\kappa(\Omega)}^2 
	&\le 2 \eta_H^2 + 2 \const{inf}^{-2} \sum_{T\in\widehat\TT_H\setminus\TT_H}  h_T^2\norm{f - \kappa^2 u_H + \Delta u_H}{T}^2 
	\\
	&\le 2 \widehat \eta_H(\TT_H)^2 + 2 \const{inf}^{-2} \widehat \eta_H(\widehat\TT_H\setminus \TT_H)^2
	\le 2 \max\{1,\const{inf}^{-2}\} \widehat \eta_H^2. 
\end{align*}
The upper bound in~\eqref{eq:infsup_h} ensures that $\widehat\eta_H < \infty$ in
case the support of $f$ is unbounded (note that $\sum_{T\in\widehat\TT_H}  h_T^2\norm{f}{T}^2$
might not be finite even if $f \in L^2(\Omega)$ in this case).
Together with uniform shape regularity~\eqref{eq:shape_regularity} of $\widehat\TT_H$,
the lower bound in \eqref{eq:infsup_h} also guarantees that
$\min\set{\kTm h_T}{T\in\TT_H, T\cap \Gamma_H \neq \emptyset}$ in
Theorem~\ref{thm:reliability} is uniformly bounded from below. 
\end{remark}

We next show that the estimator is also a \emph{local} lower bound for the error plus oscillations.

\begin{theorem}[efficiency]\label{thm:efficiency}
Let $q\ge \max\{p-2,0\}$ and $\Pi_H: L^2(\Omega) \to \PP^q(\widehat \TT_H)$ be the $L^2$-projection onto the space of piecewise polynomials of degree $q$. 
Then, there exists a constant $\const{eff}>0$ which depends only on the dimension $d$, the polynomial degrees $p$ and $q$, and shape regularity of $\widehat \TT_H$
such that 
\begin{align}\label{eq:efficiency}
\begin{split}
	\const{eff}^{-1}\eta_H(T,v_H)
	&\le \max_{T'\in\widehat \TT_H^*[F]} (\kTTp h_{T'}) \norm{\kappa(u - v_H)}{\widehat\omega_H^*[T]}  + \norm{\nabla (u - v_H)}{\widehat\omega_H^*[T]} 
	\\ 
	&\qquad + h_T \norm{(1-\Pi_H)(f-\kappa^2 v_H)}{\widehat\omega_H^*[T]} 
	 \quad \forall v_H \in \SS_0^p(\TT_H), T\in\widehat\TT_H,
\end{split}
\end{align}
where $\widehat\omega_H^*[T]:=\set{T'\in\widehat \TT_H}{T \text{ and }T' \text{ share a face}}$. 
\end{theorem}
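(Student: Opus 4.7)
The plan is to follow Verf\"urth's classical bubble-function efficiency argument, which I expect to carry through essentially unchanged thanks to a careful choice of localization patch. The one genuinely new aspect is the presence of jumps $\jump{\partial_{\bs n}v_H}$ across faces lying on the artificial boundary $\Gamma_H$; on such a face the jump reduces to $\partial_{\bs n}v_H|_T$ (since $v_H$ vanishes outside $\Omega_H$), and the corresponding face-bubble test function must live in $H_0^1(\Omega)$ rather than in $H_0^1(\Omega_H)$. This is precisely why the enlarged patch $\widehat\omega_H^*[T]$ is taken with respect to the full triangulation $\widehat\TT_H$: even when the neighbor $T'$ of $T$ across $F$ does not belong to $\TT_H$, the face bubble supported on $T\cup T'$ and extended by zero to $\Omega$ is an admissible test function in~\eqref{eq_rd_weak_intro}.

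For the volume term I set $R_T := f - \kappa^2 v_H + \Delta v_H$; since $\Delta v_H|_T$ is a polynomial of degree at most $\max\{p-2,0\}\le q$, I can split $R_T = \Pi_H R_T + (1-\Pi_H)(f-\kappa^2 v_H)$, so by triangle inequality it suffices to bound $\|\Pi_H R_T\|_T$. I test against the interior-bubble lift $w_T := \psi_T\,\Pi_H R_T$, extended by zero to $\Omega$. The bubble equivalence yields $\|\Pi_H R_T\|_T^2 \lesssim \int_T R_T\,w_T - \int_T (1-\Pi_H)(f-\kappa^2 v_H)\,w_T$; substituting the identity $\int_T R_T\,w_T = \int_T \kappa^2(u-v_H)\,w_T + \int_T \nabla(u-v_H)\cdot\nabla w_T$ obtained by testing~\eqref{eq_rd_weak_intro} against $w_T$, applying Cauchy--Schwarz, and using the inverse inequality $\|\nabla w_T\|_T\lesssim h_T^{-1}\|w_T\|_T$ together with $\|\kappa w_T\|_T \le \kTp \|w_T\|_T$ (which produces the weight $\kTp h_T$) then delivers the desired control of $h_T\|R_T\|_T$.

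For the jump term on a face $F\subseteq \partial T\cap\Omega$, I denote by $T'\in\widehat\TT_H$ the other element sharing $F$, extend $J_F := \jump{\partial_{\bs n}v_H}|_F$ polynomially to $T\cup T'\subseteq \widehat\omega_H^*[T]$, and test against $w_F := \psi_F\,\overline{J_F}$, again extended by zero to $\Omega$. Elementwise integration by parts on $T$ and $T'$ combined with testing~\eqref{eq_rd_weak_intro} against $w_F$ expresses $\|J_F\|_F^2$ as
\begin{equation*}
\|J_F\|_F^2 \lesssim \sum_{T''\in\{T,T'\}}\int_{T''}R_{T''}\,w_F + \int_{T\cup T'}\kappa^2(v_H-u)\,w_F + \int_{T\cup T'}\nabla(v_H-u)\cdot\nabla w_F.
\end{equation*}
The usual scaling bounds $\|w_F\|_{T\cup T'}\lesssim h_T^{1/2}\|J_F\|_F$ and $\|\nabla w_F\|_{T\cup T'}\lesssim h_T^{-1/2}\|J_F\|_F$, combined with Step~1 applied on both $T$ and $T'$ to control $h_{T''}\|R_{T''}\|_{T''}$, then yield the required bound on $h_T^{1/2}\|J_F\|_F$.

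Summing the two contributions over the at most $d+1$ faces of $T$ produces~\eqref{eq:efficiency}. The main obstacle is conceptual rather than technical: one must convince oneself that the bubble test functions built on $T\cup T'$ remain admissible even when $T'\notin\TT_H$, and that the weight $\kTp h_{T'}$ (resp.\ $\kTp h_T$) appearing in the right-hand side is the natural price for estimating $\|\kappa w\|$ by $\|w\|$ times the local $L^\infty$-bound of $\kappa$. Once these two points are in place the remainder of the argument is a direct transcription of the bounded-domain proof.
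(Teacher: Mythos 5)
Your proof is correct and follows essentially the same route as the paper: an interior-bubble argument for the volume residual (splitting off $(1-\Pi_H)(f-\kappa^2 v_H)$ since $(1-\Pi_H)\Delta v_H=0$), a face-bubble argument with a stable polynomial extension of the jump, and a final summation over the faces of $T$. The conceptual point you emphasize — that the face bubble built on $T\cup T'$ extended by zero is a legitimate $H_0^1(\Omega)$ test function even when $T'\in\widehat\TT_H\setminus\TT_H$, which is why the patch must be taken in $\widehat\TT_H$ rather than $\TT_H$ — is exactly the right one and is what makes the bounded-domain argument transfer verbatim.
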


\begin{proof}
The proof follows as for bounded domains $\Omega$ by the use of bubble functions and the weak formulation. 
It is split into three steps. 

\noindent
\textbf{Step~1:}
Let $T\in\widehat \TT_H$ and abbreviate the volume residual $r_T := (f - \kappa^2 v_H + \Delta v_H)|_T$. 
The triangle inequality shows that 
\begin{align*}
	h_T \norm{r_T}{T}
	\le h_T \norm{\Pi_H r_T}{T} + h_T \norm{(1-\Pi_H) r_T}{T}.
\end{align*}
Let $\beta_T := \prod_{z\in \VV_T} \phi_{H,z} \in \PP^3(T) \cap H_0^1(T)$ be the canonical bubble function on $T$, where $\VV_T$ denotes the set of vertices in $T$ and $\phi_{H,z}$ are the corresponding hat functions. 
A scaling argument, the definition of the weak solution $u$, and integration by parts for $\Delta v_H$ give that
\begin{align*}
	&\norm{\Pi_H r_T}{T}^2 
	\simeq \norm{\beta_T^{1/2} \Pi_H r_T}{T}^2 
	= \dual{r_T}{\beta_T \Pi_H r_T}_T - \dual{(1-\Pi_H) r_T}{\beta_T\Pi_H r_T}_T
	\\
	&= \dual{\kappa^2 (u-v_H)}{\beta_T \Pi_H r_T}_T + \dual{\nabla (u-v_H)}{\nabla (\beta_T \Pi_H r_T)}_T 
	\\
	&\quad - \dual{(1-\Pi_H) r_T}{\beta_T\Pi_H r_T}_T
	\\
	&\le \norm{\kappa^2 (u-v_H)}{T} \norm{\beta_T\Pi_H r_T}{T} + \norm{\nabla (u-v_H)}{T} \norm{\nabla (\beta_T \Pi_H r_T)}{T} 
	\\
	&\quad + \norm{(1-\Pi_H) r_T}{T} \norm{\beta_T\Pi_H r_T}{T}.
\end{align*}
With the inverse inequality $\norm{\nabla (\beta_T \Pi_H r_T)}{T} \lesssim h_T^{-1} \norm{\beta_T \Pi_H r_T}{T}$ and $0\le \beta_T \le 1$, we see that
\begin{align*}
	h_T \norm{r_T}{T}
	\lesssim \kTp h_T \norm{\kappa(u - v_H)}{T} + \norm{\nabla (u - v_H)}{T} + h_T \norm{(1-\Pi_H) r_T}{T}.
\end{align*}

\noindent
\textbf{Step~2:}
Let $T\in\widehat\TT_H$ and $F\subset \partial T \cap \Omega$ a face of it (if such a face exists).
Abbreviate the jump $j_F := \jump{\partial_{\bs{n}} v_H}|_F$.
Let $\beta_F := \prod_{z\in \VV_F} \phi_{H,z} \in \PP^2(\widehat \TT_H^*[F]) \cap H_0^1(\widehat\omega_H^*[F])$ be the canonical bubble function on the face patch $\widehat\omega_H^*[F]:=\bigcup \widehat \TT_H^*[F]$ with $\widehat \TT_H^*[F]:=\set{T\in\widehat \TT_H}{F \text{ is face of } T}$, where $\VV_F$ denotes the set of vertices in $F$ and $\phi_{H,z}$ are the corresponding hat functions. 
Moreover, let $\widetilde j_F\in\PP^{p-1}(\widehat \TT_H^*[F])$ be a stable piecewise polynomial extension in the sense that
\begin{align}\label{eq:trace_extension}
	{\widetilde j_F}|_F = j_F \quad \text{and} \quad \norm{\widetilde j_F}{\widehat\omega_H^*[F]} \lesssim h_T^{1/2} \norm{j_F}{F}.
\end{align}
The existence of such an extension follows from a simple scaling argument. 
Another scaling argument, the definition of the weak solution $u$, and integration by parts for $\partial_{\bs{n}} v_H$ give that
\begin{align*}
	&\norm{j_F}{F}^2 \simeq \norm{\beta_F^{1/2} j_F}{F}^2 
	= \dual{j_F}{\beta_F j_F}_F 
	= \Big| \sum_{T'\in\widehat \TT_H^*[F]} \dual{\partial_{\bs{n}} v_H}{\beta_F j_F}_{\partial T'} \Big|
	\\
	&= \Big| \sum_{T'\in\widehat \TT_H^*[F]} \dual{\Delta v_H}{\beta_F \widetilde j_F}_{T'} + \dual{\nabla v_H}{\nabla(\beta_F \widetilde j_F)}_{T'} \Big|
	\\
	&= \Big| \sum_{T'\in\widehat \TT_H^*[F]} \dual{f - \kappa^2 v_H + \Delta v_H}{\beta_F \widetilde j_F}_{T'} - \dual{\kappa^2 (u - v_H)}{\beta_F \widetilde j_F}_{T'} 
	\\
	&\qquad\qquad - \dual{\nabla (u - v_H)}{\nabla(\beta_F \widetilde j_F)}_{T'} \Big|
	\\
	&\le \sum_{T'\in\widehat \TT_H^*[F]} \norm{r_{T'}}{T'} \norm{\beta_F \widetilde j_F}{T'} + \norm{\kappa^2 (u - v_H)}{T'} \norm{\beta_F \widetilde j_F}{T'} 
	\\
	&\qquad\qquad+ \norm{\nabla (u - v_H)}{T'} \norm{\nabla(\beta_F \widetilde j_F)}{T'}.
\end{align*}
With Step~1 (for $T'$ instead of $T$), the inverse inequality $\norm{\nabla (\beta_F \widetilde j_F)}{T'} \lesssim h_{T'}^{-1} \norm{\beta_F \widetilde j_F}{T'}$, $0\le \beta_F \le 1$, and the stability~\eqref{eq:trace_extension}, we see that
\begin{align*}
	h_T^{1/2} \norm{j_F}{F}
	\lesssim \sum_{T'\in\widehat \TT_H^*[F]} h_{T'} \norm{r_{T'}}{T'} + (\kTTp h_{T'})\norm{\kappa(u - v_H)}{T'} + \norm{\nabla (u - v_H)}{T'}.
\end{align*}

\noindent
\textbf{Step~3:}
Combining Steps~1--2 and noting that $(1-\Pi_H)(\Delta v_H)|_T=0$ as well as $\widehat\omega_H^*[F] \subseteq \widehat\omega_H^*[T]$ for all faces $F$ of $T$ concludes the proof. 
\end{proof}

\begin{remark}[reliability on full triangulation]
With the same proof, the result of Theorem~\ref{thm:efficiency} holds analogously for all $T\in\widehat\TT_H$ and the extended estimator $\widehat\eta_H$ from Remark~\ref{thm:efficiency}. 
\end{remark}

\section{Refinement of infinite meshes}\label{sec:refinement}

In order to formulate an adaptive algorithm steered by the a posteriori error estimator of Section~\ref{sec:aposteriori}, we first need to 
introduce a suitable refinement strategy starting from some initial conforming triangulation $\widehat\TT_0$ of $\Omega$ and a corresponding finite subtriangulation $\TT_0$. 
We assume that $\widehat\TT_0$ is uniformly shape regular in the sense of \eqref{eq:shape_regularity}, and that the element sizes in $\widehat\TT_0$ are uniformly bounded from above and below in the sense of \eqref{eq:infsup_h}. 
In particular, this guarantees uniform reliability \eqref{eq:reliability} and efficiency \eqref{eq:efficiency} for the corresponding Galerkin approximation $u_0 \in \SS_0^p(\TT_0)$; see also Remark~\ref{rem:extended_estimator}.
From this point of view, it is essential that also the refined (infinite) triangulations still satisfy both \eqref{eq:shape_regularity} and \eqref{eq:infsup_h}. 

To this end, we can essentially use the refinement strategy of~\cite{maubach95,traxler97}, known as newest vertex bisection for $d=2$. 
We first recall the concept of tagged simplices, which are ordered tupels 
\begin{align*}
	T = [z_0, \dots, z_d; \tau],
\end{align*}
where $z_0, \dots, z_d \in \R^d$ do not lie on a $(d-1)$-dimensional hyperplane and $\tau \in \{0,\dots,d-1\}$, 
and define the refinement edge of $T$ as $E_T:={\rm conv}\{z_0,z_d\}$. 
To refine $T$, which we identify with the simplex ${\rm conv}\{z_0,\dots,z_d\}$, it is bisected along $E_T$ into the two children
\begin{align*}
  C_1(T) &:= [z_0, \mbox{$\frac{z_0+z_d}{2}$}, z_1, \dots, z_\tau, z_{\tau+1}, \dots, z_{d-1}; \, {\rm mod}(\tau+1, d)], \\
  C_2(T) &:= [z_d, \mbox{$\frac{z_0+z_d}{2}$}, z_1, \dots, z_\tau, z_{d-1}, \dots, z_{\tau+1}; \, {\rm mod}(\tau+1, d)],
\end{align*}
increasing the type $\tau$ by one. 
From now on, we suppose that all elements in $\widehat\TT_0$ are tagged and have type $\tau = 0$. 
For any successor $T$ of an element $T_0 \in \widehat\TT_0$, we define $\level(T) \in \N_0$ as the number of bisections needed to obtain $T$ from $T_0$ and $\level(T) = 0$ if and only if $T = T_0 \in \TT_0$.
Since the considered bisection rule only yields a finite number of different shapes~\cite{maubach95,traxler97}, there particularly holds the shape regularity  
\begin{align}\label{eq:level2diam}
	2^{-\level(T)} |T_0| \le |T| \le \diam(T)^d \le \const{sha} |T| = 2^{-\level(T)} \const{sha} |T_0|
\end{align}
for a uniform constant $\const{sha}>0$ that depends only on the shape-regularity constant of $\widehat \TT_0$.

Let $\widehat\T$ denote the set of all conforming triangulations $\widehat\TT_H$ that are reachable from $\widehat\TT_0$ by iterative (but finite) use of the above bisection rule. 
Similarly, we denote by $\widehat\T(\widehat\TT_H)$ all conforming triangulations $\widehat\TT_h$ reachable from $\widehat\TT_H$. 
Under an admissibility assumption on the initial mesh~\cite[Section~4]{stevenson08}, \cite{stevenson08} considers a recursive refinement algorithm for $\widehat\TT_H\in\widehat\T$ and finite $\MM_H\subset \widehat\TT_H$ which terminates, using only \emph{finitely} many bisections, and generates the coarsest conforming refinement $\refine(\widehat\TT_H,\MM_H)\in\widehat\T(\widehat\TT_H)$ of $\widehat\TT_H$ such that all $T\in\MM_H$ are bisected at least once. 
We stress that \cite{stevenson08} indeed allows for general open (\emph{not} necessarily bounded) sets $\Omega$ and \emph{locally} finite triangulations thereof; see~\cite[Section~3]{stevenson08}. 
In particular, \cite{stevenson08} provides the following mesh-closure estimate, which is an essential ingredient for the optimality proof later; see, e.g., \cite{cfpp14}. 
For $d=2$, the estimate is due to \cite{bdd04}. 

\begin{proposition}[mesh-closure estimate{~\cite[Theorem~6.1]{stevenson08}}]\label{prop:closure}
Let $\widehat \TT_0$ be admissible with finite shape-regularity constant and uniformly
bounded element sizes, i.e.,
$0<\inf_{T\in\widehat \TT_0} h_{T}$ and $\sup_{T\in\widehat \TT_0} h_{T} < \infty$. 
Then, there exists $\const{clos}>0$ such that for any sequence of successively refined
triangulations $\widehat \TT_{\ell+1} := \refine(\widehat \TT_\ell, \MM_\ell)$ with
arbitrary finite $\MM_\ell \subseteq \widehat \TT_\ell$ for all $\ell \in \N_0$, it holds that
\begin{align}\label{eq:mesh_closure}
	\# (\widehat \TT_\ell \setminus \widehat \TT_0)
	\le \const{clos} \sum_{j=0}^{\ell-1} \# \MM_j
	\quad \forall \ell \in \N.
\end{align}
The constant $\const{clos}$ depends only on the dimension $d$, the shape-regularity constant of $\widehat\TT_0$, and the ratio $\sup_{T\in\widehat \TT_0} h_{T} / \inf_{T\in\widehat \TT_0} h_{T}$. 
\qed
\end{proposition}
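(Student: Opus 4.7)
The plan is to adapt the now-classical mesh-closure argument of Binev, Dahmen, and DeVore (for $d=2$) and its extension by Stevenson to higher dimensions and, crucially, to possibly infinite conforming triangulations. Since the statement is exactly \cite[Theorem~6.1]{stevenson08}, my proof would track that reference closely while verifying that every constant stays uniform in the unbounded setting.

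First, I would recall Stevenson's \emph{admissible labeling} of refinement edges on $\widehat\TT_0$: each edge is assigned an index so that bisections across different initial elements remain compatible. Thanks to this labeling, the closure procedure $\refine(\widehat\TT_H,\MM_H)$ terminates with only finitely many additional bisections, and each closure-refined element can be traced back to some marked element via a short chain of bisection propagation. Second, I would introduce a generation-based ``distance'' $\lambda(T,T')$ between simplices, essentially combining the difference of their \level s with a geometric position factor, and establish that for every element $T$ bisected during closure at step $j$ but not contained in $\MM_j$, there exists a chain $T = T^{(0)}, T^{(1)}, \dots, T^{(m)}$ with $T^{(m)} \in \MM_k$ for some $k \le j$ along which $\lambda$ decays geometrically. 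This is the technical heart of the argument.

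The third step is the accounting: assign each closure-refined element of $\widehat\TT_\ell \setminus \widehat\TT_0$ to a marked predecessor via such a chain, and observe that the geometric decay bounds the total charge on any single $T_k \in \MM_k$ by a dimension-dependent constant. Summation over $k = 0, \dots, \ell-1$ then yields~\eqref{eq:mesh_closure}. Uniform shape regularity~\eqref{eq:level2diam} enters through the decay estimate, since the number of face-neighbors of any simplex is then bounded uniformly, which in turn controls the branching of the predecessor chains.

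The main obstacle I anticipate is verifying uniformity of $\const{clos}$ in spite of the possible infiniteness of $\widehat\TT_0$. Three ingredients combine here: (i) local finiteness of every $\widehat\TT_\ell \in \widehat\T$, which ensures the chains above are of finite length and the closure procedure is well-defined in the first place; (ii) uniform shape regularity of $\widehat\TT_0$, which bounds the number of face-neighbors per simplex uniformly and thus controls the ``fan-in'' of the chain assignment; and (iii) the assumption $0 < \inf_{T\in\widehat\TT_0} h_T$ together with $\sup_{T\in\widehat\TT_0} h_T < \infty$, which guarantees that the \level-comparison between children of different initial elements remains meaningful and uniform, since the base-level offsets across $\widehat\TT_0$ are uniformly bounded. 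Once these are in hand, the Binev--Dahmen--Stevenson machinery transfers verbatim, with $\const{clos}$ depending on exactly the quantities stated in the proposition.
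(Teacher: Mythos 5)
Your proposal is correct and follows essentially the same route as the paper: both rest on Stevenson's Theorem~6.1, which, as the paper explicitly notes, already covers general open $\Omega$ and \emph{locally} finite triangulations. The paper simply cites this result (hence the~\qed\ with no further argument), whereas you sketch the underlying Binev--Dahmen--DeVore/Stevenson chain-charging machinery and verify that the constants remain uniform in the unbounded setting. Your three uniformity ingredients (local finiteness, uniform shape regularity controlling fan-in, and two-sided element-size bounds controlling the base-level offsets across $\widehat\TT_0$) are exactly the points one must check, and they match the dependencies listed in the proposition. For completeness, note that the paper also indicates (in the remark immediately following the overlay estimate) a more elementary alternative route that you did not mention: restrict attention to the bounded region $\Omega_{H,h}$ covered by refined elements, apply the classical \emph{finite} closure estimate there, and transfer back — mirroring the alternative proof given for Proposition~\ref{prop:overlay}. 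That route avoids re-examining Stevenson's proof in the infinite setting altogether, at the cost of being slightly less self-contained.
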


While the following overlay estimate has seemingly only been formulated for bounded $\Omega$ in the literature~\cite{stevenson07,ckns08}, the standard proof extends in a straight-forward way.

\begin{proposition}[overlay estimate]\label{prop:overlay}
Let $\widehat \TT_0$ be admissible with finite shape-regularity constant and finite mesh size. 
Then, for all $\widehat\TT_H, \widehat\TT_h \in \widehat\T$, there exists a common refinement $\widehat\TT_H \oplus \widehat\TT_h \in \widehat\T(\widehat\TT_H) \cap \widehat\T(\widehat\TT_h)$ such that
\begin{align}\label{eq:overlay}
	\# \big((\widehat \TT_H \oplus \widehat \TT_h) \setminus \widehat \TT_h\big) \le 2\# (\widehat \TT_H \setminus \widehat \TT_0).
\end{align}
\end{proposition}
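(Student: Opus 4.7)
The plan is to mimic the standard overlay construction from~\cite{stevenson07,ckns08} for bounded domains and check that it remains valid in the locally finite setting.

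First I would define $\widehat\TT_H \oplus \widehat\TT_h$ pointwise: to each point $x$ lying in the interiors of $T \in \widehat\TT_H$ and $T' \in \widehat\TT_h$ I assign the finer of the two simplices. Since $\widehat\TT_H$ and $\widehat\TT_h$ are both obtained from the common admissible initial triangulation $\widehat\TT_0$ by newest vertex bisection, any two overlapping simplices from the two meshes are necessarily nested in the bisection forest rooted at $\widehat\TT_0$ (one is a descendant of the other). Hence the construction is unambiguous and yields a well-defined family of simplices covering $\Omega$.

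Next I would verify that $\widehat\TT_H \oplus \widehat\TT_h$ is a conforming triangulation in $\widehat\T(\widehat\TT_H) \cap \widehat\T(\widehat\TT_h)$. The admissibility hypothesis inherited from Proposition~\ref{prop:closure} ensures that the ``finer of two'' rule preserves conformity across matching faces; this is the classical bounded-domain lemma of~\cite{stevenson07}. In the infinite setting that argument applies locally inside any bounded window of $\Omega$, where only finitely many bisections are involved; taking the union over an exhaustion by bounded windows yields global conformity. Reachability from each of $\widehat\TT_H$ and $\widehat\TT_h$ by finitely many bisections per element follows in the same way.

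For the counting step, take $T \in (\widehat\TT_H \oplus \widehat\TT_h) \setminus \widehat\TT_h$. By construction $T \in \widehat\TT_H$, and since $T \notin \widehat\TT_h$, $T$ must be a strict descendant (in the bisection forest) of some element of $\widehat\TT_h$. As $\widehat\TT_h \in \widehat\T$, that element is either in $\widehat\TT_0$ or itself a descendant of one, so $T$ is a strict descendant of some element of $\widehat\TT_0$, i.e., $T \in \widehat\TT_H \setminus \widehat\TT_0$. This already yields the sharper bound
\begin{equation*}
\#\big((\widehat\TT_H \oplus \widehat\TT_h) \setminus \widehat\TT_h\big) \le \#(\widehat\TT_H \setminus \widehat\TT_0),
\end{equation*}
which trivially implies~\eqref{eq:overlay}. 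I expect the main obstacle to lie not in the counting but in the transfer of conformity and reachability from the bounded to the locally finite case: one must certify that a sequence of bisections acting on an infinite initial mesh is well defined and produces a triangulation in $\widehat\T(\widehat\TT_H) \cap \widehat\T(\widehat\TT_h)$. This is precisely what the bounded-window reduction resolves, after which the counting step is purely set-theoretic.
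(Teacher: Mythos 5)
Your proposal is correct, and in the counting step it takes a genuinely different (and slightly sharper) route than the paper's alternative proof. Both proofs use the same ``pointwise finer'' definition of the overlay and the same type of local/bounded-window reduction for conformity and reachability (your exhaustion argument is the same idea as the paper's observation that no refinements occur outside $\Omega_{H,h}$). Where you differ is in the cardinality bound: the paper restricts to the bounded region $\Omega_{H,h}$, invokes the standard inequality
$\#(\widehat\TT_H|_{\Omega_{H,h}} \oplus \widehat\TT_h|_{\Omega_{H,h}}) \le \#\widehat\TT_H|_{\Omega_{H,h}} + \#\widehat\TT_h|_{\Omega_{H,h}} - \#\widehat\TT_0|_{\Omega_{H,h}}$,
and then converts the cardinality-difference to the set-difference form $\#((\cdot)\setminus\widehat\TT_h)$ at the cost of a factor $2$ (refined elements have at least two children). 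You instead produce a direct injection
$(\widehat\TT_H \oplus \widehat\TT_h) \setminus \widehat\TT_h \hookrightarrow \widehat\TT_H \setminus \widehat\TT_0$,
which gives $\#((\widehat\TT_H \oplus \widehat\TT_h) \setminus \widehat\TT_h) \le \#(\widehat\TT_H \setminus \widehat\TT_0)$ without the factor~$2$. This buys a sharper constant and a shorter argument; the paper's route buys the ability to quote the bounded-domain overlay result off the shelf. Both deliver~\eqref{eq:overlay}. One small note: in the conformity/reachability discussion you should make explicit (as the paper does implicitly in the counting) that $\widehat\TT_H \setminus \widehat\TT_0$ and $\widehat\TT_h \setminus \widehat\TT_0$ are finite, so only finitely many elements of $\widehat\TT_H$ (resp.\ $\widehat\TT_h$) get bisected to reach the overlay; this is what certifies membership in $\widehat\T(\widehat\TT_H) \cap \widehat\T(\widehat\TT_h)$ rather than mere local conformity.
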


\begin{proof}
As already mentioned, the proposition follows as in the standard case of bounded $\Omega$; see~\cite[proof of Lemma~5.2]{stevenson07} or \cite[Lemma~3.7]{ckns08}. 
The overlay is given by
\begin{align*}
	\widehat \TT_H \oplus \widehat \TT_h := \set{T\in\TT_H}{\exists T' \in \TT_h \quad T \subseteq T'}  \cup \set{T\in\TT_h}{\exists T' \in \TT_H \quad T \subseteq T'}.
\end{align*}
We also give an alternative proof, which directly employs the standard result: 
Define the refined region
\begin{align*}
	\Omega_{H,h} 
	:= \bigcup (\widehat \TT_H \setminus \widehat \TT_0) \cup \bigcup (\widehat \TT_h \setminus \widehat \TT_0)
	= \bigcup (\widehat \TT_0 \setminus \widehat \TT_H) \cup \bigcup (\widehat \TT_0 \setminus \widehat \TT_H).
\end{align*}
Note that $\# (\widehat \TT_H \setminus \widehat \TT_0) < \infty$ and thus $\# (\widehat \TT_0 \setminus \widehat \TT_H) < \infty$. 
The same is true for $\widehat \TT_h$ instead of $\widehat \TT_H$.
We can employ the standard overlay estimate for the bounded set $\Omega_{H,h}$ with finite initial triangulation $(\widehat \TT_0 \setminus \widehat \TT_H) \cup (\widehat \TT_0 \setminus \widehat \TT_H)$, i.e., 
\begin{align*}
	\# \big((\widehat \TT_H|_{\Omega_{H,h}} \oplus \widehat \TT_h|_{\Omega_{H,h}}) \setminus \widehat \TT_h|_{\Omega_{H,h}}\big) \le 2\# (\widehat \TT_H|_{\Omega_{H,h}} \setminus \widehat \TT_0|_{\Omega_{H,h}}),
\end{align*}
where we use the notation $\widehat \TT|_{\Omega_{H,h}} := \set{T\in\widehat\TT}{T\subseteq\overline\Omega_{H,h}}$ for any mesh $\widehat\TT$. 
To be precise, the overlay estimate is usually stated as 
\begin{align*}
	\# (\widehat \TT_H|_{\Omega_{H,h}} \oplus \widehat \TT_h|_{\Omega_{H,h}}) \le \# \widehat \TT_H|_{\Omega_{H,h}} + \# \widehat \TT_h|_{\Omega_{H,h}} - \# \widehat \TT_0|_{\Omega_{H,h}},
\end{align*}
but the fact that refined elements are split into at least two children readily implies that 
\begin{align*}
	\# \big((\widehat \TT_H|_{\Omega_{H,h}} \oplus \widehat \TT_h|_{\Omega_{H,h}}) \setminus \widehat \TT_h|_{\Omega_{H,h}}\big) 
	&\le 2 \big(\#(\widehat \TT_H|_{\Omega_{H,h}} \oplus \widehat \TT_h|_{\Omega_{H,h}})  - \#\widehat \TT_h|_{\Omega_{H,h}}\big) 
	\\
	&\le 2 (\# \widehat \TT_H|_{\Omega_{H,h}} - \# \widehat \TT_0|_{\Omega_{H,h}}) 
	\\
	&\le 2\# (\widehat \TT_H|_{\Omega_{H,h}} \setminus \widehat \TT_0|_{\Omega_{H,h}}).
\end{align*}
Clearly, we have that $\widehat \TT_H|_{\Omega_{H,h}} \oplus \widehat \TT_h|_{\Omega_{H,h}} = (\widehat \TT_H\oplus \widehat \TT_h)|_{\Omega_{H,h}}$. 
Hence, the identities $(\widehat \TT_H\oplus \widehat \TT_h)|_{\Omega_{H,h}} \setminus \widehat \TT_h|_{\Omega_{H,h}} = (\widehat \TT_H \oplus \widehat \TT_h) \setminus \widehat \TT_h$ 
and $\widehat \TT_H|_{\Omega_{H,h}} \setminus \widehat \TT_0|_{\Omega_{H,h}}=\widehat \TT_H \setminus \widehat \TT_0$ yield the desired overlay estimate~\eqref{eq:overlay}. 
Finally, the inclusion $\widehat \TT_H \oplus \widehat \TT_h \in \widehat\T(\widehat\TT_H) \cap \widehat\T(\widehat\TT_h)$ is equivalent to $\widehat \TT_H \oplus \widehat \TT_h$ being conforming and reachable from $\widehat\TT_H$ and $\widehat\TT_h$ by elementwise bisection.
Conformity and reachability follow from the facts that $(\widehat \TT_H\oplus \widehat \TT_h)|_{\Omega_{H,h}}$ is conforming and reachable from $\widehat\TT_H$ and $\widehat\TT_h$  by elementwise bisection 
and that no refinements outside of $\Omega_{H,h}$ take place to reach the conforming triangulations $\widehat\TT_H$ and $\widehat\TT_h$ from $\widehat\TT_0$. 
\end{proof}

\begin{remark}[alternative proof of closure estimate]
Building only on the result for finite triangulations, an extension of the closure estimate from Proposition~\ref{prop:closure} to our setting can alternatively be seen similarly as in the proof of the overlay estimate.
\end{remark}

\begin{remark}[admissibility condition]
While for $d>2$, it is not clear that the simplices in $\widehat\TT_0$ can be tagged in such a way that $\widehat\TT_0$ becomes admissible, admissibility can be ensured if the centroid of each element is added; see~\cite[Appendix~A]{stevenson07}.
In the recent work~\cite{dgs25}, a simple initialization of the Maubach bisection routine~\cite{maubach95} is proposed which applies to any conforming initial triangulation. 
It is shown that Maubach’s routine with this initialization always terminates and satisfies the closure estimate. 
While \cite{dgs25} only consider \emph{finite} triangulations, a generalization to the present setting might be feasible.
In this case, also that refinement strategy can be applied in Section~\ref{sec:adaptivity}. 
\end{remark}

\section{Adaptive algorithm and optimal convergence}
\label{sec:adaptivity}

We consider the following adaptive algorithm, where the employed refinement strategy of Section~\ref{sec:refinement} guarantees that it is practical in the sense that the generated subtriangulations $\TT_\ell$ are finite. 

\begin{algo}[Adaptive mesh refinement]
\label{alg:adaptive}
\\
\textbf{Input:} Initial admissible triangulation $\widehat \TT_0$ of $\Omega$ with finite shape-regularity constant and uniformly bounded element sizes;  
finite subset of active elements $\TT_0 \subset \widehat \TT_0$; right-hand side $f$ with $\supp f \subseteq \bigcup \TT_0$; polynomial degree $p$; D\"orfler parameter $0<\theta\le1$.\\
\textbf{Loop:} Iterate the following four steps for all $\ell=0,1,2,\dots$:
\begin{enumerate}[\rm(i)]
\item Compute Galerkin approximation $u_\ell\in\SS_0^p(\TT_\ell)$ of $u\in H_0^1(\Omega)$. 
\item Compute error indicators $\eta_\ell(T)$ for all $T\in\TT_\ell$. 
\item Determine a minimal set of marked elements such that $\theta \eta_\ell^2 \le \eta_\ell(\MM_\ell)^2$.
\item Generate refinement $\widehat \TT_{\ell+1} := \refine(\widehat \TT_\ell, \MM_\ell)$ and activate all refined elements, i.e., $\TT_{\ell+1} := \widehat \TT_{\ell+1} \setminus (\widehat \TT_\ell \setminus \TT_\ell) = \widehat \TT_{\ell+1} \setminus (\widehat \TT_0 \setminus \TT_0)$. 
\end{enumerate}
\textbf{Output:} Triangulations $(\widehat\TT_\ell)_{\ell\in\N_0}$; subtriangulations $(\TT_\ell)_{\ell\in\N_0}$; Galerkin approximations $(u_\ell)_{\ell\in\N_0}$;  error estimators $(\eta_\ell)_{\ell\in\N_0}$. 
\end{algo}


\begin{remark}[comparison with the algorithm from~\cite{chaumontfrelet_2025b}]
In~\cite{chaumontfrelet_2025b}, one of the authors previously introduced an
adaptive algorithm for unbounded domains based on a flux-equilibrated estimator.
In that algorithm, a truncation box $(-n,n)^d$ is explicitly used to
define $\Omega_H := \Omega \cap (-n,n)^d$, and the parameter $n$ is
increased in the mesh-refinement procedure when elements close to the artificial
boundary are marked. A drawback of this approach is that the artificial boundary
is isotropically pushed towards infinity, which may not be optimal. In addition,
it is not clear that optimality convergence rates can be shown (nor that they hold true)
for the algorithm in~\cite{chaumontfrelet_2025b}. Algorithm~\ref{alg:adaptive}
is therefore a significant improvement since the artificial boundary is locally pushed towards infinity, which in turn allows for optimal convergence rates that we rigorously establish below.
\end{remark}

Building on the axioms of adaptivity from~\cite{cfpp14} for the error estimator $\eta$, which we will verify in Section~\ref{sec:axioms}, we will show R-linear as well as optimal convergence of Algorithm~\ref{alg:adaptive}. 
The proofs are given in Section~\ref{sec:proofs}. 
Since the abstract framework of~\cite{cfpp14} is essentially designed for finite triangulations of a fixed bounded Lipschitz domain, both the axioms and the corresponding convergence proofs must be adapted to the present setting. 

\begin{theorem}[R-linear convergence]\label{thm:linear_convergence}
There exist $\const{lin}>0$ and $0<q_{\rm red}<1$ such that 
\begin{align}\label{eq:linear_convergence}
	\eta_{\ell+n} \le \const{lin} q_{\rm lin}^n \eta_\ell \quad \forall \ell,n\in\N_0.
\end{align}
The constants $\const{lin}$ and $q_{\rm lin}$
depend only on the dimension $d$, the polynomial degree $p$,
the shape regularity of $\widehat\TT_0$, the constants
$\sup_{T\in \widehat \TT_0} (\kTp h_T)$ and
$\inf\set{\kappa_{\widehat\omega_0[T]}^- h_T}{T\in \widehat{\TT}_0 \setminus \TT_0}$,
and the D\"orfler parameter $\theta$. Here, $\kappa_{\widehat\omega_0[T]}^-$ is the essential
infimum of $\kappa$ on the patch
$\widehat\omega_0[T] := \bigcup \set{T'\in\widehat\TT_0}{T\cap T' \neq \emptyset}$.
\end{theorem}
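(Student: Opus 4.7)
The plan is to invoke the axiomatic framework of~\cite{cfpp14}, which reduces R-linear convergence of $(\eta_\ell)_{\ell\in\N_0}$ to the verification of four abstract properties of the estimator. These are: (i) \emph{stability on non-refined elements}, namely
$|\eta_{\ell+1}(\TT_\ell \cap \TT_{\ell+1}, u_{\ell+1}) - \eta_\ell(\TT_\ell \cap \TT_{\ell+1}, u_\ell)| \le \const{stab}\norm{u_{\ell+1} - u_\ell}{H^1_\kappa(\Omega)}$; (ii) \emph{reduction on refined elements}, $\eta_{\ell+1}(\TT_{\ell+1} \setminus \TT_\ell, u_\ell)^2 \le 2^{-1/d}\eta_\ell(\TT_\ell \setminus \TT_{\ell+1}, u_\ell)^2$, which encodes the fact that each bisection halves element volumes; (iii) \emph{reliability}, which follows from Theorem~\ref{thm:reliability} together with the observation that the algorithmic assumption $\supp f \subseteq \bigcup \TT_0 \subseteq \bigcup \TT_\ell$ forces the extra boundary term $\dual{f}{v}_{\Omega\setminus\Omega_\ell}$ in~\eqref{eq:weak_reliability} to vanish; and (iv) \emph{quasi-orthogonality}, which here reduces to the Pythagorean identity
\[
\norm{u - u_{\ell+1}}{H^1_\kappa(\Omega)}^2 + \norm{u_{\ell+1} - u_\ell}{H^1_\kappa(\Omega)}^2 = \norm{u - u_\ell}{H^1_\kappa(\Omega)}^2,
\]
since $a(\cdot,\cdot)$ is symmetric coercive and $\SS_0^p(\TT_\ell) \subseteq \SS_0^p(\TT_{\ell+1})$ via extension by zero.

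The combination then proceeds by the standard argument. Properties (i) and (ii) together with a Young-type splitting and the D\"orfler bulk inequality $\theta\eta_\ell^2 \le \eta_\ell(\MM_\ell)^2 \le \eta_\ell(\TT_\ell \setminus \TT_{\ell+1})^2$ produce the \emph{estimator reduction}
\[
\eta_{\ell+1}^2 \le q_{\rm est}\,\eta_\ell^2 + \const{red}\norm{u_{\ell+1} - u_\ell}{H^1_\kappa(\Omega)}^2
\]
with an explicit $q_{\rm est} = q_{\rm est}(\theta, d) < 1$. Setting $\Delta_\ell := \norm{u - u_\ell}{H^1_\kappa(\Omega)}^2 + \gamma\,\eta_\ell^2$ for sufficiently small $\gamma > 0$, Pythagoras absorbs the norm-squared difference with a negative coefficient, while reliability $\norm{u - u_\ell}{H^1_\kappa(\Omega)}^2 \le \const{rel}^2\,\eta_\ell^2$ bounds $\Delta_\ell$ from above by $(\const{rel}^2 + \gamma)\,\eta_\ell^2$. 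This yields a contraction $\Delta_{\ell+1} \le q_{\rm ctr}\,\Delta_\ell$ with $q_{\rm ctr} < 1$, whence $\gamma\,\eta_{\ell+n}^2 \le \Delta_{\ell+n} \le q_{\rm ctr}^n \Delta_\ell \le q_{\rm ctr}^n (\const{rel}^2 + \gamma)\,\eta_\ell^2$, which is precisely~\eqref{eq:linear_convergence} with $q_{\rm lin} := \sqrt{q_{\rm ctr}}$.

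The main technical obstacle is the \emph{uniform} verification of stability and reduction with constants depending only on the data listed in the theorem. The unbounded-domain setting introduces two complications that distinguish the proof from the bounded case. First, when inactive elements in $\widehat\TT_0 \setminus \TT_0$ are refined (to preserve conformity through \refine), their children populate the moving truncation boundary $\Gamma_\ell$, and one must verify that $\min\set{\kTm h_T}{T\in\TT_\ell,\, T\cap \Gamma_\ell \neq \emptyset}$ stays bounded below by tracking each such $T$ back to an ancestor in $\widehat\TT_0$ whose patch contains an inactive neighbor; this is precisely what the hypothesis $\inf\{\kappa^-_{\widehat\omega_0[T]} h_T : T \in \widehat\TT_0 \setminus \TT_0\}>0$ is designed to provide, ensuring a uniform $\const{rel}$ via Theorem~\ref{thm:reliability}. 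Second, the faces $F\subseteq\Gamma_\ell$ carry $\partial_{\bs{n}} u_\ell$ rather than a true jump, and one must confirm that the bubble-function, Cl\'ement-interpolation, and inverse-estimate machinery used in the proofs of Theorems~\ref{thm:reliability}--\ref{thm:efficiency} transfers to these boundary contributions with constants depending only on the shape regularity of $\widehat\TT_0$ and on $\sup_{T\in\widehat\TT_0}(\kTp h_T)$, independently of the level $\ell$ and of how far $\Gamma_\ell$ has been pushed.
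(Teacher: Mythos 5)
Your proposal is correct and reaches the same conclusion via essentially the same ingredients---stability (A1), reduction (A2), reliability, and the Pythagorean identity---but you combine them by a different, more elementary route than the paper. You run the classical Cascon--Kreuzer--Nochetto--Siebert quasi-error contraction $\Delta_\ell := \norm{u-u_\ell}{H^1_\kappa(\Omega)}^2 + \gamma\,\eta_\ell^2$, feeding the estimator reduction into Pythagoras and absorbing the error term via reliability to get $\Delta_{\ell+1}\le q_{\rm ctr}\Delta_\ell$; the paper instead packages Pythagoras and reliability into the \emph{general quasi-orthogonality} axiom (A4) of Lemma~\ref{lem:orthogonality}, $\sum_{k\ge\ell}\norm{u_{k+1}-u_k}{H^1_\kappa(\Omega)}^2\le\const{rel}^2\eta_\ell^2$, and then invokes the abstract result~\cite[Proposition~4.10]{cfpp14} that turns estimator reduction plus (A4) into R-linear convergence. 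In the symmetric setting of this paper the two routes are essentially equivalent, and your direct contraction has the virtue of being self-contained; what the paper's route via (A4) buys is generality, since as noted in Remark~\ref{rem:nonsymmetric} it extends to non-symmetric bilinear forms (advection terms) where the Pythagorean identity fails but (A4) can still be salvaged along the lines of \cite{feischl22}. Your handling of the two technical points that distinguish the unbounded case is sound: the vanishing of $\dual{f}{v}_{\Omega\setminus\Omega_\ell}$ under $\supp f\subseteq\bigcup\TT_0$, and---more importantly---the uniform lower bound on $\min\set{\kTm h_T}{T\in\TT_\ell,\,T\cap\Gamma_\ell\neq\emptyset}$, which you correctly trace to the hypothesis on $\inf\set{\kappa^-_{\widehat\omega_0[T]}h_T}{T\in\widehat\TT_0\setminus\TT_0}$. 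One small clarification worth adding there: an active $T\in\TT_\ell$ touching $\Gamma_\ell$ shares a \emph{full} face with a never-refined inactive element $T'\in\widehat\TT_0\setminus\TT_0$ (by conformity), so $h_T\simeq h_{T'}$ up to shape regularity, and the ancestor $T_0\supseteq T$ lies in $\widehat\omega_0[T']$, giving $\kappa_T^-\ge\kappa^-_{\widehat\omega_0[T']}$; together these give exactly the needed lower bound $\kTm h_T\gtrsim\kappa^-_{\widehat\omega_0[T']}h_{T'}$.
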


To formulate optimal convergence, we first need to introduce suitable approximation classes. 
For any $\widehat\TT_H \in \widehat\T$, we abbreviate the finite subtriangulation $\TT_H := \widehat \TT_H \setminus (\widehat \TT_0 \setminus \TT_0)$, which is consistent with the notation from Algorithm~\ref{alg:adaptive}, and the set of all reachable subtriangulations $\T:=\set{\TT_H}{\widehat \TT_H \in \widehat \T}$. 
For $s>0$, we define
\begin{align}
	\norm{u}{\A_s} := \sup_{\epsilon>0} \min_{\TT_H \in \T_\epsilon(u)} \#(\TT_H \setminus \TT_0)^s \epsilon
	 \quad \text{with} \quad \T_\epsilon(u) := \set{\TT_H \in \T}{\eta_H \le \epsilon}. 
\end{align}
Note that $\norm{u}{\A_s}$ can be equivalently defined via the extended meshes $\widehat\TT_H$ since $\TT_H \setminus \TT_0 = \widehat\TT_H \setminus \widehat\TT_0$. 
Moreover, we mention the equivalence 
\begin{align}
	\# \TT_H - \#\TT_0 \le \#(\TT_H \setminus \TT_0) \le 2(\# \TT_H - \#\TT_0),
\end{align}
where the last inequality follows from the fact that refined elements are at least split once. 
We also mention that the approximation class can be equivalently defined by interchanging the roles of the number of elements and the estimator; see, e.g., \cite[Lemma~5.1]{ffgp19}.

\begin{theorem}[optimal convergence]\label{thm:optimal_convergence}
Let $0<\theta<\theta_{\rm opt}:=\frac{1}{1+\const{stab}^2\const{drel}^2}$ (with the constants $\const{stab}>0$ and $\const{drel}>0$ from  Lemma~\ref{lem:stability} and~\ref{lem:discrete_reliability} below) and $s>0$.
Then, there exists $\const{opt}>0$ such that 
\begin{align*}
	\#(\TT_\ell \setminus \TT_0)^s \eta_\ell 
	\le \const{opt} \norm{u}{\A_s}
	\quad \forall \ell\in\N_0.
\end{align*}
The constant $\const{opt}$ depends only on the dimension $d$, the polynomial degree $p$,
shape regularity of $\widehat\TT_0$, the constants
$\sup_{T\in \widehat \TT_0} (\kTp h_T)$, 
$\inf\set{\kappa_{\widehat\omega_0[T]}^- h_T}{T \in \widehat{\TT}_0\setminus \TT_0}$,
and $\sup_{T\in\widehat \TT_0} h_{T} / \inf_{T\in\widehat \TT_0} h_{T}$
and the D\"orfler parameter $\theta$, and the rate $s$. 
\end{theorem}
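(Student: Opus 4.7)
The plan is to follow the classical axioms-of-adaptivity template of~\cite{cfpp14}, which reduces optimality to four ingredients: R-linear convergence of the estimator (Theorem~\ref{thm:linear_convergence}), the mesh-closure and overlay estimates of Section~\ref{sec:refinement}, a \emph{comparison lemma} showing that the D\"orfler-marked set $\MM_\ell$ has cardinality comparable to an optimal refinement from $\A_s$, and a summation step. The bookkeeping between the conforming infinite triangulations $\widehat\TT_\ell$ and their active finite subsets $\TT_\ell$ is handled by the identity $\widehat\TT_\ell\setminus\widehat\TT_0 = \TT_\ell\setminus\TT_0$ and the activation convention of Algorithm~\ref{alg:adaptive}.

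The comparison lemma is the central step. Given $\ell\in\N_0$, I would choose $c_\theta\in(0,1)$ depending on the gap $\theta_{\rm opt}-\theta$, set $\delta := c_\theta^{1/2}\eta_\ell$, and use the definition of $\|u\|_{\A_s}$ to pick $\widehat\TT_\delta\in\widehat\T$ with $\eta_\delta \le \delta$ and $\#(\TT_\delta\setminus\TT_0)\le 2\|u\|_{\A_s}^{1/s}\delta^{-1/s}$. Forming the overlay $\widehat\TT_\star := \widehat\TT_\ell\oplus\widehat\TT_\delta$, with active set $\TT_\star$ and Galerkin approximation $u_\star$, one applies stability, reduction, and discrete reliability to the decomposition $\TT_\ell = (\TT_\ell\cap\TT_\star)\cup(\TT_\ell\setminus\TT_\star)$, then exploits $\eta_\star\le\eta_\delta\le\delta$ together with the Pythagoras identity for the symmetric bilinear form $a(\cdot,\cdot)$ to derive
\begin{equation*}
\theta\,\eta_\ell^2 \le \eta_\ell(\RR)^2
\quad\text{with}\quad
\RR := \TT_\ell\setminus\TT_\star;
\end{equation*}
the explicit threshold $\theta_{\rm opt} = (1+\const{stab}^2\const{drel}^2)^{-1}$ is precisely what allows this derivation to go through. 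Since an element $T\in\TT_\ell$ is never demoted to inactive, $\RR\subseteq\widehat\TT_\ell\setminus\widehat\TT_\star$, so quasi-minimality of $\MM_\ell$ and Proposition~\ref{prop:overlay} give
\begin{equation*}
\#\MM_\ell \le \#\RR \le \#(\widehat\TT_\star\setminus\widehat\TT_\ell)\le 2\,\#(\widehat\TT_\delta\setminus\widehat\TT_0) = 2\,\#(\TT_\delta\setminus\TT_0)\lesssim \|u\|_{\A_s}^{1/s}\eta_\ell^{-1/s}.
\end{equation*}

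Given the comparison lemma, the remainder is mechanical. Mesh-closure (Proposition~\ref{prop:closure}) yields
\begin{equation*}
\#(\TT_\ell\setminus\TT_0) = \#(\widehat\TT_\ell\setminus\widehat\TT_0) \le \const{clos}\sum_{j=0}^{\ell-1}\#\MM_j \lesssim \|u\|_{\A_s}^{1/s}\sum_{j=0}^{\ell-1}\eta_j^{-1/s},
\end{equation*}
and R-linear convergence turns the sum into a geometric series of common ratio $q_{\rm lin}^{1/s}<1$, so that $\sum_{j=0}^{\ell-1}\eta_j^{-1/s}\lesssim\eta_\ell^{-1/s}$. Rearranging yields $\#(\TT_\ell\setminus\TT_0)^s\eta_\ell\lesssim\|u\|_{\A_s}$, as claimed.

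The principal obstacle is the discrete-reliability step inside the comparison lemma. In the bounded-domain setting, one compares the Galerkin solutions on nested meshes and bounds their energy-norm difference by the estimator restricted to refined elements, using a Cl\'ement-type interpolant that reproduces coarse-mesh functions on unrefined elements. In our setting, the same strategy must accommodate three new subtleties: that the overlay $\widehat\TT_\star$ generally activates previously inactive elements near the truncation boundary, effectively migrating $\Gamma_\ell$ outward; that the residual remainder $\dual{f}{v}_{\Omega\setminus\Omega_\ell}$ from Theorem~\ref{thm:reliability} must vanish, which is secured by the standing hypothesis $\supp f\subseteq\bigcup\TT_0\subseteq\Omega_\ell$; and that the interpolation must be controlled up to the artificial boundary, which is what forces the hypothesis that $\kappa_{\widehat\omega_0[T]}^- h_T$ be uniformly bounded from below on $\widehat\TT_0\setminus\TT_0$ appearing in the constants of the theorem. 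These points will be settled in Section~\ref{sec:axioms}.
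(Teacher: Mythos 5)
Your overall route---closure estimate, comparison lemma via overlay, optimality of D\"orfler marking with threshold $\theta_{\rm opt}=(1+\const{stab}^2\const{drel}^2)^{-1}$, then geometric summation using R-linear convergence---is the same as the paper's and is the right skeleton. However, the central step of the comparison lemma is stated with the \emph{wrong} set, and the error is precisely the kind of bounded-domain reflex that the unbounded setting is meant to correct.

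You claim that stability, reduction, discrete reliability and Pythagoras yield $\theta\,\eta_\ell^2 \le \eta_\ell(\RR)^2$ with $\RR := \TT_\ell\setminus\TT_\star$. That is the bounded-domain formula. In the present setting, discrete reliability (Lemma~\ref{lem:discrete_reliability}) does \emph{not} control $\|u_\star-u_\ell\|_{H^1_\kappa(\Omega)}$ by $\eta_\ell(\TT_\ell\setminus\TT_\star)$; it controls it by $\eta_\ell(\RR_{\ell,\star})$, where
\begin{equation*}
\RR_{\ell,\star} = (\TT_\ell\setminus\TT_\star)\,\cup\,\set{T\in\TT_\ell\cap\TT_\star}{\partial T\cap\Gamma_{\ell,\star}\neq\emptyset},
\qquad
\Gamma_{\ell,\star}=\partial\Omega_\ell\setminus\partial\Omega_\star.
\end{equation*}
The second contribution---unrefined active elements of $\TT_\ell$ that touch the part of the old truncation boundary that has become interior in $\TT_\star$---is generally nonempty, because the overlay activates elements near $\Gamma_\ell$ and pushes it outward. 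Plugging discrete reliability into the standard chain therefore proves $\theta\eta_\ell^2\le\eta_\ell(\RR_{\ell,\star})^2$, not the stronger (and false) statement with the smaller set. Consequently, minimality of $\MM_\ell$ gives only $\#\MM_\ell\le\#\RR_{\ell,\star}$, and one must then invoke the counting bound \eqref{eq:bound_R}, namely $\#\RR_{\ell,\star}\le\const{ref}\,\#(\TT_\star\setminus\TT_\ell)$, before applying the overlay estimate. Your third paragraph does flag the moving-boundary subtlety and the role of $\kappa^- h$ in discrete reliability, which shows you see the mechanism; but the displayed claim about $\RR$ contradicts that discussion. Replace $\RR$ by $\RR_{\ell,\star}$ and insert the extra step via \eqref{eq:bound_R}, and the argument closes.
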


While we only activate elements $\widehat\TT_\ell \setminus \TT_\ell$ if they are bisected to ensure conformity of the next mesh $\widehat\TT_{\ell+1}$, depending on the exact solution $u$, it might be more beneficial to also allow for activation of elements that are not refined. 
Given $\widehat\TT_H \in \widehat\T$, an \emph{arbitrary} finite subtriangulation $\TT_H\subset \widehat\TT_H$ with $\widehat\TT_H \setminus \TT_H \subset \widehat\TT_0$, and marked elements $\MM_H \subseteq \TT_H$, we define
\begin{align}
	&\push(\TT_H, \MM_H) := \TT_H \cup \set{T\in\widehat\TT_H}{\exists M \in \MM_H \text{ s.t. } T \text{ and }M \text{ share a face}},
	\\
	&\refine(\TT_H, \MM_H) := \refine(\widehat\TT_H,\MM_H) \setminus (\widehat\TT_H \setminus \TT_H).
\end{align}
By $\T^{\rm pr}=\T^{\rm pr}(\TT_0)$, we denote the set of all finite triangulations $\TT_H^{\rm pr}$ reachable via $\push$ and $\refine$ in the sense that there exist $(\widehat\TT_j^{\rm pr})_{j=1}^n$ and finite $\TT_j^{\rm pr}\subset \widehat\TT_j^{\rm pr}$ such that $\widehat \TT_n^{\rm pr} = \widehat \TT_H^{\rm pr}$ and $\widehat \TT_{j+1}^{\rm pr} = \refine(\push(\TT_j^{\rm pr},\MM_j^{\rm push}),\MM_j^{\rm ref})$ for $j=0,\dots,n-1$ and some $\MM_j^{\rm push},\MM_j^{\rm ref}\subseteq\TT_j^{\rm pr}$. 
Here, $\TT_0^{\rm pr}:=\TT_0$ is again the fixed initial set of active elements from Algorithm~\ref{alg:adaptive}. 
The next proposition states that, for all $s>0$, $\norm{u}{\A_s}$ is equivalent to 
\begin{align}
	\norm{u}{\A_s^{\rm pr}} := \sup_{\epsilon>0} \min_{\TT_H^{\rm pr} \in \T_\epsilon^{\rm pr}(u)} \#(\TT_H^{\rm pr} \setminus \TT_0)^s \epsilon
	\,\,\, \text{with} \,\,\, \T_\epsilon^{\rm pr}(u) := \set{\TT_H^{\rm pr} \in \T^{\rm pr}}{\eta_H^{\rm pr} \le \epsilon}. 
\end{align}
In particular, this shows that any rate that can be achieved by a combination of pushing and refining can also be achieved by refining---and implicitly pushing, i.e., activating refined elements---only. 
The proof only hinges on the mesh-closure estimate of Proposition~\ref{prop:closure} and quasi-monotonicity of the estimator, i.e.,
\begin{align}\label{eq:quasi-monotonicity}
	\eta_h^{\rm pr} \le \const{mon} \eta_H^{\rm pr} \quad \forall \TT_H^{\rm pr} \in\T^{\rm pr}, \TT_h^{\rm pr}\in\T^{\rm pr}(\TT_H)
\end{align}
for some uniform constant $\const{mon}>0$, which is a simple consequence of the axioms stability~(A1), reduction~(A2), and discrete reliability~(A3), shown in the following Section~\ref{sec:axioms}; see, e.g., \cite[Lemma 3.5]{cfpp14}. 
Here, $\T^{\rm pr}(\TT_H^{\rm pr})$ is defined analogously as $\T^{\rm pr}(\TT_0)$. 

\begin{proposition}[equivalence of approximation classes]
For all $s>0$, there exists a constant $\const{push}>0$ depending only on the dimension $d$, the mesh-closure constant $\const{clos}$, the quasi-monotonicity constant $\const{mon}$,  and the parameter $s$ such that
\begin{align}\label{eq:approximation_push}
	\norm{u}{\A_s^{\rm pr}} \le \norm{u}{\A_s} \le \const{push} \norm{u}{\A_s^{\rm pr}}.
\end{align}
\end{proposition}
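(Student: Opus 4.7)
The easier direction $\norm{u}{\A_s^{\rm pr}} \le \norm{u}{\A_s}$ is immediate: every refine-only step of Algorithm~\ref{alg:adaptive} is a particular push-and-refine step with empty push marks, so $\T \subseteq \T^{\rm pr}$ and consequently $\T_\epsilon(u) \subseteq \T^{\rm pr}_\epsilon(u)$ for every $\epsilon>0$. Taking the minimum over a smaller set can only increase it, and the supremum in $\epsilon$ then yields the bound.

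For the reverse inequality, fix $\epsilon>0$ and pick a minimizer $\TT_H^{\rm pr} \in \T^{\rm pr}_{\epsilon/\const{mon}}(u)$ with $\#(\TT_H^{\rm pr}\setminus\TT_0)^s (\epsilon/\const{mon}) \le \norm{u}{\A_s^{\rm pr}}$, where $\const{mon}$ is the constant from~\eqref{eq:quasi-monotonicity}. Let $\widehat{\TT}_H^{\rm pr}\in\widehat\T$ denote its underlying infinite mesh, reached from $\widehat{\TT}_0$ by a sequence of refinement marks $(\MM_j^{\rm ref})_{j=0}^{n-1}$. The key observation is that $\TT_H^{\rm pr}$ differs from the ``natural'' refine-only active set $\widehat{\TT}_H^{\rm pr}\setminus(\widehat{\TT}_0\setminus\TT_0)$ only through the finite set $A := \TT_H^{\rm pr} \cap (\widehat{\TT}_0 \setminus \TT_0)$ of pushed-but-never-refined elements. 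The plan is to refine each element of $A$ once. Setting $\widehat{\TT}_H := \refine(\widehat{\TT}_H^{\rm pr}, A)\in\widehat\T$ and $\TT_H := \widehat{\TT}_H \setminus (\widehat{\TT}_0 \setminus \TT_0) \in \T$, a short direct computation using the identities $\widehat{\TT}_H = (\widehat{\TT}_H^{\rm pr}\setminus A) \cup (\text{children of } A)$ and $\widehat{\TT}_H \cap (\widehat{\TT}_0\setminus\TT_0) = \widehat{\TT}_H^{\rm pr}\setminus(\TT_H^{\rm pr}\cup A)$ shows $\TT_H = \refine(\TT_H^{\rm pr},A) \in \T^{\rm pr}(\TT_H^{\rm pr})$; quasi-monotonicity~\eqref{eq:quasi-monotonicity} then gives $\eta_H \le \const{mon}\,\eta_H^{\rm pr} \le \epsilon$, so $\TT_H \in \T_\epsilon(u)$.

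For the complexity bound, note first that $\#(\TT_H \setminus \TT_0) = \#(\widehat{\TT}_H \setminus \widehat{\TT}_0)$, since refined elements of $\widehat{\TT}_0$ drop out of $\TT_H$ while all new elements are automatically activated. The mesh $\widehat{\TT}_H$ is reached from $\widehat{\TT}_0$ by the extended sequence $\MM_0^{\rm ref},\dots,\MM_{n-1}^{\rm ref}, A$, so the mesh-closure estimate of Proposition~\ref{prop:closure} yields
\begin{align*}
\#(\widehat{\TT}_H \setminus \widehat{\TT}_0) \le \const{clos}\Bigl(\sum_{j=0}^{n-1} \#\MM_j^{\rm ref} + \#A\Bigr).
\end{align*}
An elementary counting argument---each bisection of an $\widehat{\TT}_0$-element adds two descendants to $\widehat{\TT}_H^{\rm pr}\setminus\widehat{\TT}_0$ and each bisection of a previously created element adds one---gives $\sum_{j} \#\MM_j^{\rm ref} \le \#(\widehat{\TT}_H^{\rm pr} \setminus \widehat{\TT}_0)$. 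Combined with the disjoint decomposition $\TT_H^{\rm pr}\setminus\TT_0 = (\widehat{\TT}_H^{\rm pr}\setminus\widehat{\TT}_0) \cup A$, this produces $\#(\TT_H\setminus\TT_0) \le \const{clos}\,\#(\TT_H^{\rm pr}\setminus\TT_0)$.

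Combining everything, $\#(\TT_H\setminus\TT_0)^s\,\epsilon \le \const{clos}^s\,\const{mon}\,\#(\TT_H^{\rm pr}\setminus\TT_0)^s\,(\epsilon/\const{mon}) \le \const{clos}^s\,\const{mon}\,\norm{u}{\A_s^{\rm pr}}$, and taking the supremum over $\epsilon$ proves~\eqref{eq:approximation_push} with $\const{push} := \const{clos}^s\,\const{mon}$. The delicate point is the complexity bound: since the closure estimate applies only to refine-only sequences, one has to select the right such sequence producing $\widehat{\TT}_H$ (namely the original refinement sequence for $\widehat{\TT}_H^{\rm pr}$ augmented by one final step refining $A$) and then use the elementary counting identity to avoid any spurious dependence on intermediate, later-refined elements.
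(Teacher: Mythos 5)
Your proof is correct and takes a genuinely different route for the non-trivial direction $\norm{u}{\A_s} \le \const{push}\norm{u}{\A_s^{\rm pr}}$. The paper's Step~1 emulates the $\push$ operation step-by-step: each marked element is bisected $N$ times (with $N$ a dimension-dependent constant chosen so that $N$ bisections create an interior vertex on every face), so that conformity closure is forced to refine, and thereby activate, all face-neighbours that the push would have activated. You instead defer all activation to the very end: you first reconstruct the underlying infinite mesh $\widehat\TT_H^{\rm pr}$ using only the refinement marks $\MM_j^{\rm ref}$, and then identify the finite set $A = \TT_H^{\rm pr}\cap(\widehat\TT_0\setminus\TT_0)$ of pushed-but-never-refined elements and refine each of them exactly once to activate them. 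This is cleaner in two ways: the construction requires no artificial $\refine^N$, and the resulting constant $\const{push} = \const{mon}\const{clos}^s$ is sharper than the paper's $\const{mon}(N\const{clos})^s$, avoiding the dimension-dependent factor $N^s$. Your counting argument $\sum_j\#\MM_j^{\rm ref} \le \#(\widehat\TT_H^{\rm pr}\setminus\widehat\TT_0)$ is correct (each mark induces at least one bisection, each bisection increases $\#(\widehat\TT_{j}^{\rm pr}\setminus\widehat\TT_0)$ by at least one), and is the natural replacement of the paper's telescoping bound $\sum_j\#\MM_j \le \#\TT_H^{\rm pr}-\#\TT_0$ once the push marks have been removed from the accounting. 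Two cosmetic remarks: the stated identity $\widehat\TT_H\cap(\widehat\TT_0\setminus\TT_0)=\widehat\TT_H^{\rm pr}\setminus(\TT_H^{\rm pr}\cup A)$ is only an inclusion ``$\subseteq$'' in general, since conformity closure of $\refine(\widehat\TT_H^{\rm pr},A)$ may bisect further elements of $\widehat\TT_H^{\rm pr}\setminus\TT_H^{\rm pr}$; however, after intersecting the right-hand side with $\widehat\TT_H$ one does recover equality, which is all that is needed for $\TT_H=\refine(\TT_H^{\rm pr},A)$, so the conclusion stands. Also, since $A\subseteq\TT_H^{\rm pr}$ the union $\TT_H^{\rm pr}\cup A$ is just $\TT_H^{\rm pr}$.
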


\begin{proof}
Clearly, it holds that $\T \subset \T^{\rm pr}$ so that the first inequality in~\eqref{eq:approximation_push} is trivially satisfied. 
We split the proof of the second inequality into two steps. 

\noindent
\textbf{Step~1:}
We show the existence of a uniform constant $N\in\N$ such that for all $\TT_H^{\rm pr} \in \T^{\rm pr}$, there exists $\TT_H \in \T \cap \T^{\rm pr}(\TT_H^{\rm pr})$ with
\begin{align} 
	\#(\TT_H \setminus \TT_0) \le N \const{clos} \#(\TT_H^{\rm pr} \setminus \TT_0).
\end{align}
Using the notation from the definition of $\T^{\rm pr}$, we set $\MM_j := \MM_j^{\rm push} \cup \MM_j^{\rm ref}$. 
By potentially reducing $\MM_j^{\rm push}$, we may assume that $\# \MM_j^{\rm push} \le \#(\push(\TT_j^{\rm pr},\MM_j^{\rm push})\setminus \TT_j^{\rm pr})$, i.e., each marked element yields the activation of at least one new element. 
Moreover, there exists a uniform constant $N\in\N$ depending only on $d$ such that $N$ bisections of any element $T$ create an interior vertex in all faces of $T$. 
We denote the corresponding refinement strategy (plus conforming closure) on $\widehat\T$ by $\refine^N$ and set $\widehat\TT_{j+1} := \refine^N(\widehat\TT_j,\MM_j\cap \widehat\TT_j)$ as well as $\TT_{j+1} := \widehat\TT_{j+1} \setminus (\widehat \TT_0 \setminus \TT_0)$.
Since $\refine^N$ implicitly ``pushes'' all marked elements, it is clear that $\TT_H:=\TT_n$ is finer than $\TT_H^{\rm pr}$, i.e., $\TT_H \in \T^{\rm pr}(\TT_H^{\rm pr})$. 
Furthermore, the closure estimate (Proposition~\ref{prop:closure}) gives that 
\begin{align*}
	\#(\TT_H \setminus \TT_0) 
	= \#(\widehat\TT_H \setminus \widehat\TT_0) 
	\le N \const{clos} \sum_{j=0}^{n-1} \#\MM_j. 
\end{align*}
The assumption on $\MM_j^{\rm push}$ and the fact that elements in $\MM_j^{\rm ref}$ are bisected at least once to obtain $\#\TT_{j+1}^{\rm pr}$ yield that 
\begin{align*}
	\sum_{j=0}^{n-1} \#\MM_j
	&\le \sum_{j=0}^{n-1} (\#\MM_j^{\rm push} + \#\MM_j^{\rm ref})
	\\
	&\le \sum_{j=0}^{n-1} (\#\TT_{j+1}^{\rm pr} - \#\TT_j^{\rm pr})
	= \#\TT_H^{\rm pr} - \#\TT_0
	\le \#(\TT_H^{\rm pr} \setminus \TT_0). 
\end{align*}

\noindent
\textbf{Step~2:}
Let $\epsilon>0$ and let $\TT_H^{\rm pr} \in \T^{\rm pr}_\epsilon(u)$ be the minimizer of $\#(\TT_H^{\rm pr} \setminus \TT_0)$. 
Then quasi-monotonicity~\eqref{eq:quasi-monotonicity} implies for the mesh $\TT_H$ of Step~1 that $\eta_H \le \const{mon}\eta_H^{\rm pr} \le \const{mon}\epsilon$, i.e., $\TT_H\in\T_{\const{mon}\epsilon}(u)$.
With Step~1, we conclude that 
\begin{align*}
	\norm{u}{\A_s} 
	&= \sup_{\epsilon>0} \min_{\TT_H\in\T_{\const{mon}\epsilon}(u)} \#(\TT_H \setminus \TT_0)^s \const{mon}\epsilon 
	\\
	&\le \const{mon} \sup_{\epsilon>0} \min_{\TT_H^{\rm pr} \in \T_\epsilon^{\rm pr}(u)}(N \const{clos})^s \#(\TT_H^{\rm pr} \setminus \TT_0)^s \epsilon
	= \const{mon} (N\const{clos})^s\norm{u}{\A_s^{\rm pr}}
\end{align*}
and thus the proof. 
\end{proof}

\begin{remark}[error convergence]
With reliability (Theorem~\ref{thm:reliability}) and efficiency (Theorem~\ref{thm:efficiency}), the convergence results of this section of course also hold for the total error 
\begin{align*}
	\norm{u - u_H}{H^1_\kappa(\Omega)} + \Big(\sum_{T\in\TT_H}  h_T^2 \norm{(1-\Pi_H)(f-\kappa^2 u_H)}{T}^2\Big)^{1/2},
\end{align*}
being equivalent to the estimator $\eta_H$. 
In case of piecewise polynomial $\kappa$, a standard argument further shows that the corresponding approximation class can be decoupled into the approximation class for the error $\norm{u - u_H}{H^1_\kappa(\Omega)}$ and the oscillation term $(\sum_{T\in\TT_H}  h_T^2 \norm{(1-\Pi_H)f}{T}^2)^{1/2}$; see, e.g., \cite[Proposition~4.6]{cfpp14}. 
\end{remark}

\section{Axioms of adaptivity for the estimator}\label{sec:axioms}

Throughout this section, let $\widehat\TT_H$ be a conforming triangulation of $\Omega$ and $\widehat\TT_h$ a finer conforming triangulation of $\Omega$ in the sense that 
\begin{align*}
	T = \bigcup\set{T'\in\widehat\TT_h}{T'\subseteq T}
	\quad \forall T \in \widehat\TT_H. 
\end{align*}
Let $\TT_H\subset \widehat\TT_H$ and $\TT_h\subset \widehat\TT_h$ be finite subtriangulations forming the sets $\Omega_H$ and $\Omega_h$ such that $\Omega_H \subseteq \Omega_h$. 
In particular, this yields the nestedness
\begin{align}
	\SS_0^p(\TT_H) \subseteq \SS_0^p(\TT_h).
\end{align}
Under the assumption that 
\begin{align}
	\supp f \subseteq \overline \Omega_H, 
\end{align}
we verify (slight variations of) the four axioms of adaptivity~\cite{cfpp14} for the error estimator~\eqref{eq:estimator}. 
We will crucially use them in the following section, where we give the proof of R-linear and optimal convergence of Algorithm~\ref{alg:adaptive}.

\begin{lemma}[stability on non-refined elements~(A1)]\label{lem:stability}
There exists a constant $\const{stab}$ depending only on the dimension $d$,
the polynomial degree $p$, shape regularity of $\TT_h$, and $\max_{T\in\TT_h} (\kTp h_T)$
such that
\begin{align}\label{eq:stability}
|\eta_h(\UU_H,v_h) - \eta_H(\UU_H,v_H)| 
\le \const{stab} \norm{v_h - v_H}{H^1_\kappa(\Omega)}
\\
\,\,\, \forall \UU_H \subseteq \TT_H \cap \TT_h, v_H \in \SS_0^p(\TT_H), v_h \in \SS_0^p(\TT_h).
\end{align}
\end{lemma}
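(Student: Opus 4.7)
The plan is to exploit the reverse triangle inequality on the $\ell^2$-aggregation of element indicators, then bound each local difference by inverse-type estimates for piecewise polynomials. Write $w := v_h - v_H$. The inclusion $\SS_0^p(\TT_H) \subseteq \SS_0^p(\TT_h)$ ensures $w \in \SS_0^p(\TT_h)$, and for each $T \in \UU_H \subseteq \TT_H \cap \TT_h$ the restriction $w|_T$ is a polynomial of degree at most $p$, while the element size $h_T$ is identical in both meshes.

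First I would make a preparatory observation about faces: any $\widehat\TT_h$-face contained in $\partial T\cap\Omega$ which is not a $\widehat\TT_H$-face must lie in the interior of some element of $\widehat\TT_H$; across such faces $v_H$ is $C^1$, hence contributes no jump. Consequently $\norm{\jump{\partial_{\bs{n}} v_H}}{\partial T \cap \Omega}$ is unchanged when the partition of $\partial T\cap\Omega$ is refined to $\widehat\TT_h$-faces, the $f$ terms inside each squared indicator cancel exactly, and the reverse triangle inequality in $\ell^2$ yields
\begin{align*}
|\eta_h(\UU_H,v_h) - \eta_H(\UU_H,v_H)|^2
&\le \sum_{T\in\UU_H}\Big( h_T^2 \norm{\kappa^2 w - \Delta w}{T}^2 \\
&\qquad + h_T \norm{\jump{\partial_{\bs{n}} w}}{\partial T\cap\Omega}^2 \Big).
\end{align*}
For each $T\in\UU_H$ I would then apply the triangle inequality together with the polynomial inverse estimate $h_T\norm{\Delta w}{T} \lesssim \norm{\nabla w}{T}$ to obtain $h_T\norm{\kappa^2 w - \Delta w}{T} \lesssim (\kTp h_T)\norm{\kappa w}{T} + \norm{\nabla w}{T}$. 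For each $\widehat\TT_h$-face $F\subseteq\partial T\cap\Omega$ with neighboring element $T'\in\widehat\TT_h$, the scaled trace inequality for polynomials of bounded degree gives $h_T^{1/2}\norm{\jump{\partial_{\bs{n}} w}}{F} \lesssim \norm{\nabla w}{T} + \norm{\nabla w}{T'}$, using shape regularity to exchange $h_{T'}^{1/2}$ for $h_T^{1/2}$.

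Summing over $T\in\UU_H$, absorbing the uniform factor $\max_{T\in\TT_h}(\kTp h_T)$, and using the bounded overlap of the patches $T\cup\bigcup_{T'} T'$ (from shape regularity of $\widehat\TT_h$) produces the desired bound by $\norm{w}{H^1_\kappa(\Omega)}$. The only step that requires genuine care, as opposed to a routine calculation, is the face bookkeeping outlined above: although $T\in\TT_H\cap\TT_h$ is unrefined, its $\widehat\TT_H$-neighbor across a face may have been subdivided in $\widehat\TT_h$, so $\partial T\cap\Omega$ carries more $\widehat\TT_h$-faces than $\widehat\TT_H$-faces. The $C^0$-conformity of $v_H$ neutralizes this on the $v_H$-side. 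On the $w$-side, if a neighbor $T'$ lies in $\widehat\TT_h\setminus\TT_h$ then $v_h|_{T'}=0$ by extension and, since $\Omega_H\subseteq\Omega_h$, also $v_H|_{T'}=0$, so $\norm{\nabla w}{T'}=0$ and that term is harmless.
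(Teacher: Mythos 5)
Your proof is correct and follows the same route as the paper's: reverse triangle inequality together with the $f$-cancellation, an inverse estimate for $\Delta w$, a scaled trace estimate for $\jump{\partial_{\bs n} w}$, absorbing the reaction term into the weight $\kTp h_T$, and finite patch overlap. Your face-bookkeeping paragraph is sound but in fact unnecessary: since $T \in \TT_H \cap \TT_h$ and both $\widehat\TT_H$ and $\widehat\TT_h$ are conforming, no hanging vertex can appear on $\partial T$ in passing to $\widehat\TT_h$, so the $\widehat\TT_h$-faces contained in $\partial T$ coincide exactly with the $\widehat\TT_H$-faces and the identity $\jump{\partial_{\bs n} v_h} - \jump{\partial_{\bs n} v_H} = \jump{\partial_{\bs n}(v_h - v_H)}$ on $\partial T \cap \Omega$ holds trivially, without any appeal to $C^1$-smoothness of $v_H$ inside coarse neighbors.
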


\begin{proof}
The result follows 
by the use of the reverse triangle inequality
\begin{align*}
	&|\eta_h(\UU_H,v_h) - \eta_H(\UU_H,v_H)|
	\\
	&\quad\le \Big(\sum_{T\in\UU_H} h_T^2 \norm{\kappa^2 (v_h - v_H) - \Delta (v_h - v_H)}{L^2(T)}^2
		+ h_T \norm{[\partial_{\bs{n}} (v_h - v_H)]}{\partial T \cap \Omega}^2 \Big)^{1/2},
\end{align*}
the inverse inequality
\begin{align*}
	h_T^2 \norm{\Delta (v_h - v_H)}{T}^2 \lesssim \norm{\nabla (v_h - v_H)}{T}^2,
\end{align*}
the trace and inverse inequality
\begin{align*}
	&h_T \norm{\jump{\partial_{\bs{n}} (v_h - v_H)}}{\partial T\cap\Omega}^2
	\\
	&\quad\lesssim \norm{\nabla (v_h - v_H)}{\omega_h^*[T]}^2 + h_T \norm{\nabla (v_h - v_H)}{\omega_h^*[T]} \norm{D^2 (v_h - v_H)}{\omega_h^*[T]} 
	\\
	&\quad\lesssim \norm{\nabla (v_h - v_H)}{\omega_h^*[T]}^2,
\end{align*}
where $\omega_h^*[T]:=\set{T'\in\TT_h}{T \text{ and }T' \text{ share a face}}$,
and a finite overlap of these patches depending only on shape regularity of $\TT_h$. 
Finally, for the $L^2(T)$ term, we have that
\begin{equation*}
h_T^2 \norm{\kappa^2 (v_h - v_H)}{L^2(T)}^2
\leq
(\kTp h_T)^2 \norm{\kappa(v_h-v_H)}{L^2(T)}^2
\leq
(\kTp h_T)^2 \norm{v_h-v_H}{H^1_{\kappa}(T)}^2.
\end{equation*}
This concludes the proof.
\end{proof}

\begin{lemma}[reduction on refined elements (A2)]\label{lem:reduction}
Suppose there exists $0<q_{\rm red}<1$ such that $h_{T'} \le q_{\rm red}^{2} h_T$ for all $T'\in\TT_h\setminus\TT_H$ with $T'\subseteq T$. 
Then, it holds that
\begin{align}\label{eq:reduction1}
	\eta_h(\TT_h \setminus \TT_H, v_H) \le q_{\rm red} \eta_H(\TT_H \setminus \TT_h, v_H)
	\quad \forall v_H \in \SS_0^p(\TT_H).
\end{align}
In particular, this yields that
\begin{align}\label{eq:reduction2}
\begin{split}
	\eta_h(\TT_h \setminus \TT_H, v_h) \le q_{\rm red} \eta_H(\TT_H \setminus \TT_h, v_H) &+ \const{stab} \norm{v_h - v_H}{H^1_\kappa(\Omega)}
	\\
	&\quad\forall v_H \in \SS_0^p(\TT_H), v_h \in \SS_0^p(\TT_h).
\end{split}
\end{align}
\end{lemma}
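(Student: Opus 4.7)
The plan is to establish the two inequalities separately.

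For \eqref{eq:reduction1}, the natural strategy is to group the sum over $T' \in \TT_h \setminus \TT_H$ by the unique parent $T \in \widehat{\TT}_H$ with $T' \subseteq T$. The hypothesis combined with $q_{\rm red} < 1$ forces $T' \subsetneq T$, so in particular $T \notin \widehat{\TT}_h$ and hence $T \notin \TT_h$. The key observation is that $v_H$ is a polynomial of degree at most $p$ on all of $T$: either $T \in \TT_H$ and this holds by the definition of $\SS_0^p(\TT_H)$, or $T \in \widehat{\TT}_H \setminus \TT_H$, whence $T \subseteq \overline{\Omega \setminus \Omega_H}$ and $v_H \equiv 0$ on $T$ by extension by zero. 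Consequently, $\Delta v_H|_T$ is a well-defined polynomial and $\jump{\partial_{\bs n} v_H}$ vanishes across any $\widehat{\TT}_h$-face strictly interior to $T$.

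With these ingredients, I would control the two parts of $\eta_h(T', v_H)^2$ separately. The hypothesis and the fact that the children of $T$ partition $T$ yield
\begin{align*}
\sum_{\substack{T' \in \TT_h \setminus \TT_H \\ T' \subseteq T}} h_{T'}^2 \norm{f - \kappa^2 v_H + \Delta v_H}{T'}^2 \le q_{\rm red}^4 h_T^2 \norm{f - \kappa^2 v_H + \Delta v_H}{T}^2,
\end{align*}
while the vanishing of interior $\widehat{\TT}_h$-jumps restricts the jump contribution to sub-faces of $\partial T \cap \Omega$ and gives
\begin{align*}
\sum_{\substack{T' \in \TT_h \setminus \TT_H \\ T' \subseteq T}} h_{T'} \norm{\jump{\partial_{\bs n} v_H}}{\partial T' \cap \Omega}^2 \le q_{\rm red}^2 h_T \norm{\jump{\partial_{\bs n} v_H}}{\partial T \cap \Omega}^2.
\end{align*}
Using $q_{\rm red}^4 \le q_{\rm red}^2$ and summing over parents $T$ then gives $\eta_h(\TT_h \setminus \TT_H, v_H)^2 \le q_{\rm red}^2 \sum_T \widehat\eta_H(T, v_H)^2$, where $\widehat\eta_H$ is the extended estimator from Remark~\ref{rem:extended_estimator}. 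The remaining and most delicate step is to identify this parent sum with $\eta_H(\TT_H \setminus \TT_h, v_H)^2$: for $T \in \TT_H$ the identification is immediate, since a strictly refined $T$ cannot lie in $\TT_h$; for $T \in \widehat{\TT}_H \setminus \TT_H$ (the ``push'' case), the vanishing of $v_H$ on $T$ together with $\supp f \subseteq \overline{\Omega_H}$ annihilates the volume residual, while the residual jump contribution on $\partial T \cap \Gamma_H$ reduces to a $\widehat{\TT}_H$-jump across the face shared with the $\TT_H$-neighbor $T_0$; the latter can be absorbed into $\eta_H(T_0, v_H)$ with $T_0 \in \TT_H \setminus \TT_h$ by invoking the fact that the newest-vertex-bisection conformity rule forces $T_0$ to also be refined when $T$ is. I expect this bookkeeping, which relies on shape regularity of $\widehat{\TT}_H$ and the structure of the bisection routine, to be the main obstacle.

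For \eqref{eq:reduction2}, I would combine \eqref{eq:reduction1} with a variant of Lemma~\ref{lem:stability}. Since $v_H \in \SS_0^p(\TT_H) \subseteq \SS_0^p(\TT_h)$ is piecewise polynomial on $\widehat{\TT}_h$, an essentially verbatim repetition of the proof of that lemma, with $\eta_H(\UU_H, v_H)$ replaced by $\eta_h(\UU_h, v_H)$ and $\UU_H \subseteq \TT_H \cap \TT_h$ replaced by $\UU_h \subseteq \TT_h$, yields
\begin{align*}
|\eta_h(\TT_h \setminus \TT_H, v_h) - \eta_h(\TT_h \setminus \TT_H, v_H)| \le \const{stab} \norm{v_h - v_H}{H^1_\kappa(\Omega)}.
\end{align*}
The desired bound \eqref{eq:reduction2} then follows from the triangle inequality combined with \eqref{eq:reduction1}.
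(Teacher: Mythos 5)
Your decomposition by parent elements in $\widehat\TT_H$, rather than only in $\TT_H$, is more explicit than the paper's, and you are right that the pushed elements ($T'\subseteq T\in\widehat\TT_H\setminus\TT_H$) require care: the paper restricts the sum to $\TT_h|_{\Omega_H}$ citing only $\supp f\cup\supp v_H\subseteq\overline\Omega_H$, which annihilates the volume residual on a pushed $T'$ but says nothing about the jump of $\partial_{\bs n}v_H$ across $\Gamma_H$, which such a $T'$ does see if it touches $\Gamma_H$ (the normal derivative of $v_H$ on $\Gamma_H$ from the inside is generically nonzero even though $v_H$ vanishes there). So your scrutiny is not misplaced.

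However, the step you flag as ``the main obstacle'' is indeed a genuine gap in your proposal, and the fix you sketch does not close it. The proposed absorption of the $\Gamma_H$-jump into $\eta_H(T_0,v_H)$ for the $\TT_H$-neighbor $T_0$ fails on two counts. First, newest-vertex bisection does \emph{not} force $T_0$ to be refined whenever $T$ is: the conformity closure can reach $T$ from a neighbor other than $T_0$, in which case the shared face on $\Gamma_H$ is not bisected, $T_0$ can remain unrefined, and then $T_0\notin\TT_H\setminus\TT_h$, so $\eta_H(T_0,v_H)$ is simply not available on the right-hand side. Second, even when $T_0$ is refined, $\eta_H(T_0,v_H)^2$ already accounts for the jump on that face via the descendants of $T_0$ inside $\Omega_H$ that the parent decomposition assigns to $T_0$; adding the outside contribution from $T'$ on top would count the same face twice, which cannot be reconciled with the sharp constant $q_{\rm red}$ the lemma asserts. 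Your sketch acknowledges neither issue. The paper's own proof takes the different (and shorter) route of dropping the pushed elements outright, which, as you sensed, is justified for the volume term but not visibly for the $\Gamma_H$-jump; a complete argument would need either a structural assumption ruling out nonzero $\Gamma_H$-jumps on pushed elements, or a weakened conclusion with a modified constant or an enlarged right-hand side of the type $\RR_{H,h}$ as in Lemma~\ref{lem:discrete_reliability}.

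The derivation of \eqref{eq:reduction2} from \eqref{eq:reduction1} via a $\TT_h$-only variant of stability is exactly the paper's argument (Lemma~\ref{lem:stability} applied with both meshes equal to $\TT_h$) and is fine.
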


\begin{proof}
By definition, we have that 
\begin{align*}
	\eta_h(\TT_h \setminus \TT_H, v_H)^2 
	= \sum_{T\in\TT_h \setminus \TT_H} h_T^2 \norm{f - \kappa^2 v_H + \Delta v_H}{T}^2 + h_T \norm{[\partial_{\bs{n}} v_H]}{\partial T\cap \Omega}^2.
\end{align*}
Since $\supp f \cup \supp v_H \subseteq \overline\Omega_H$, it suffices to consider $T\in\TT_h|_{\Omega_H} := \set{T'\in\TT_h}{T'\subseteq\overline\Omega_H}$.
We set $\TT_h|_T:=\set{T'\in\TT_h}{T'\subseteq T}$ for all $T\in\TT_H$. 
Since refined elements $T\in\TT_H\setminus\TT_h = \TT_H\setminus\TT_h|_{\Omega_H}$ are the union of their children, it holds that
\begin{align*}
	&\sum_{T\in\TT_h|_{\Omega_H}\setminus\TT_H} \eta_h(T,v_H)^2 
	= \sum_{T\in\TT_H\setminus\TT_h} \sum_{T'\in\TT_h|_T} \eta_h(T',v_H)^2
	\\
	&\qquad\qquad\qquad\qquad= \sum_{T\in\TT_H\setminus\TT_h} \sum_{T'\in\TT_h|_T}  h_{T'}^2 \norm{f - \kappa^2 v_H + \Delta v_H}{T'}^2 + h_{T'} \norm{[\partial_{\bs{n}} v_H]}{\partial T'\cap \Omega}^2.
\end{align*}
The assumption $h_{T'} \le q_{\rm red}^2 h_T$ and the fact that the jump $[\partial_{\bs{n}} v_H]$ vanishes on all faces which lie inside $T$ give that
\begin{align*}
	\eta_h(\TT_h \setminus \TT_H, v_H)^2
	&\le q_{\rm red}^2 \sum_{T\in\TT_H \setminus \TT_h} \sum_{T' \in\TT_h|_T} h_T^2 \norm{f - \kappa^2 v_H + \Delta v_H}{T'}^2 + h_T \norm{[\partial_{\bs{n}} v_H]}{\partial T'\cap\Omega}^2
	\\
	& = q_{\rm red}^2 \sum_{T\in\TT_H \setminus \TT_h} \eta_H(T,v_H)^2 
	= q_{\rm red}^2 \eta_H(\TT_H\setminus\TT_h, v_H)^2. 
\end{align*}
This concludes the proof of \eqref{eq:reduction1}. 

The second assertion~\eqref{eq:reduction2} follows from stability (A1) applied to $\TT_H^{\rm (A1)} = \TT_h^{\rm (A1)} = \TT_h$ and $\UU_H^{\rm (A1)} = \TT_h \setminus\TT_H$ and \eqref{eq:reduction1}, i.e., 
\begin{align*}
	\eta_h(\TT_h \setminus \TT_H, v_h) &\le \eta_h(\TT_h \setminus \TT_H, v_H) + \const{stab} \norm{v_h - v_H}{H^1_\kappa(\Omega)}
	\\
	&\le q_{\rm red} \eta_H(\TT_H \setminus \TT_h, v_H) + \const{stab} \norm{v_h - v_H}{H^1_\kappa(\Omega)}.
\end{align*}
This concludes the proof.
\end{proof}

\begin{remark}[estimator convergence]
It is noteworthy that {\rm (A1)}--{\rm (A2)} imply estimator convergence $\lim_{\ell\to\infty}\eta_\ell = 0$ of a standard adaptive algorithm based on D\"orfler marking and bisection of elements, even though the initial computational domain $\Omega_0$ is never enlarged in the sense that $\Omega_\ell = \Omega_0$ for the sequence of generated meshes $(\TT_\ell)_{\ell\in\N_0}$. 
The elementary proof, e.g., \cite[Lemma~4.7 \& Corollary 4.8]{cfpp14}, also exploits a priori convergence of the corresponding sequence of Galerkin approximations $(u_\ell)_{\ell\in\N_0}$ to some $u_\infty \in \overline{\bigcup_{\ell\in\N_0} \SS_0^p(\TT_\ell)}$. 
For a simple model problem, such an a priori convergence result is already found in~\cite{bv84}.
The proof readily extends to an abstract Lax--Milgram setting. 
However, an adaptive algorithm remaining on $\Omega_0$ can in general not guarantee error convergence $\lim_{\ell\to\infty} \norm{u - u_\ell}{H^1_\kappa(\Omega)}$, i.e., $u = u_\infty$. 
Indeed, such an algorithm does not necessarily ensure that the minimal (weighted) mesh sizes at the artificial boundary $\min\set{\kTm h_T}{T\in\TT_\ell, T\cap \Gamma_\ell \neq \emptyset}$ corresponding to the sequence of meshes $(\TT_\ell)_{\ell\in\N_0}$ is uniformly bounded from below so that the reliability constant of Theorem~\ref{thm:reliability} might degenerate as $\ell\to\infty$.
\end{remark}

\begin{lemma}[discrete reliability (A3)]\label{lem:discrete_reliability}
Suppose that $\widehat\TT_H$ is uniformly shape regular~\eqref{eq:shape_regularity}. 
Then, there exists a constant $\const{drel}>0$ which depends only on the dimension $d$, the polynomial degree $p$, the shape regularity of $\widehat\TT_H$, and
$\min\set{\kTm h_T}{T\in\TT_H, T\cap \Gamma_H \neq \emptyset}$
such that 
\begin{subequations}\label{eq:discrete_reliability}
\begin{align}
	\norm{u_h - u_H}{H^1_\kappa(\Omega)} \le \const{drel} \eta_H(\RR_{H,h}),
\end{align}
where
\begin{align}
	\RR_{H,h} := (\TT_H \setminus \TT_h) \cup \set{T\in\TT_H\cap \TT_h}{\partial T\cap \Gamma_{H,h} \neq \emptyset}
\end{align}
\end{subequations}
with $\Gamma_{H,h} := \partial\Omega_H \setminus \partial\Omega_h$. 
Moreover, there exists a constant $\const{ref}>0$ depending only on the dimension $d$ and the shape regularity of $\widehat\TT_H$ such that
\begin{align}\label{eq:bound_R}
	\#\RR_{H,h} \le \const{ref} \, \#(\TT_H \setminus \TT_h) \le \const{ref} \, \#(\TT_h \setminus \TT_H).
\end{align}
\end{lemma}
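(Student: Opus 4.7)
The plan is to mimic the proof of Theorem~\ref{thm:reliability} with $u$ replaced by $u_h$, exploiting a discrete Galerkin orthogonality together with a Scott--Zhang-type operator $J_H: H_0^1(\Omega) \to \SS_0^p(\TT_H)$ tailored so that $J_H v|_T = v|_T$ for every $T \in (\TT_H \cap \TT_h) \setminus \RR_{H,h}$ and every $v \in \SS_0^p(\TT_h)$. To begin, the nestedness $\SS_0^p(\TT_H) \subseteq \SS_0^p(\TT_h)$ together with $\supp f \subseteq \overline\Omega_H$ yields the key orthogonality: for every $v_H \in \SS_0^p(\TT_H)$ extended by zero, both $a(u_H, v_H) = \int_{\Omega_H} f v_H$ and $a(u_h, v_H) = \int_{\Omega_h} f v_H = \int_{\Omega_H} f v_H$, hence $a(u_h - u_H, v_H) = 0$.

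Next I would construct $J_H$ by the Scott--Zhang rule: for each Lagrange node $z$ of $\TT_H$, set $J_H v(z) = 0$ if $z \in \partial\Omega_H$, and otherwise $J_H v(z) = \dual{v}{\tau_z}_{F_z}$ for a face $F_z$ incident to $z$ and biorthogonal dual $\tau_z$. The critical design choice is the face selection: whenever $z$ is a vertex of some $T^\star \in \TT_H \cap \TT_h$, pick $F_z \subseteq \partial T^\star$ (otherwise pick any face of $\TT_H$ at $z$). With this choice, the desired invariance holds for $T \in (\TT_H \cap \TT_h) \setminus \RR_{H,h}$ and $v \in \SS_0^p(\TT_h)$: each Lagrange node $z$ of $T$ has $T$ itself as a valid $T^\star$, so $F_z$ is also a face of $\TT_h$ and $v|_{F_z}$ is a polynomial of degree at most $p$, and biorthogonality yields $\dual{v}{\tau_z}_{F_z} = v(z)$. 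For a boundary node $z \in \partial T \cap \partial \Omega_H$, the condition $T \notin \RR_{H,h}$ excludes $z \in \Gamma_{H,h}$, so $z$ lies in $(\partial\Omega \cap \partial\Omega_H) \cup (\Gamma_H \cap \partial\Omega_h)$, where both $u_H$ and $u_h$ vanish; hence $v(z) = 0 = J_H v(z)$. Thus $J_H v$ and $v$ agree at every Lagrange node of $T$ and coincide on $T$ as polynomials of degree at most $p$.

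The same operator $J_H$ provides the local approximation estimate~\eqref{eq:local_approximation} used in Step~2 of the proof of Theorem~\ref{thm:reliability}. For elements whose patch does not touch $\Gamma_H$, this is the classical Scott--Zhang bound controlled by $\norm{\nabla v}{\widehat\omega_H[T]}$. For elements $T$ touching $\Gamma_H$, the mismatch between $v \in H_0^1(\Omega)$ (vanishing on $\partial\Omega$ only) and $J_H v$ (vanishing on $\partial\Omega_H$) produces an extra $L^2$ contribution which is absorbed into $\norm{\kappa v}{\widehat\omega_H[T]}$ via the bound $\norm{v}{\widehat\omega_H[T]} \le (\kappa_T^-)^{-1}\norm{\kappa v}{\widehat\omega_H[T]}$ combined with the hypothesis $\kappa_T^- h_T \gtrsim 1$ for $T\cap\Gamma_H \neq \emptyset$. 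This is precisely where the dependence of $\const{drel}$ on $\min\set{\kTm h_T}{T\in\TT_H, T\cap \Gamma_H \neq \emptyset}$ enters.

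With these ingredients in hand, Galerkin orthogonality gives $\norm{u_h - u_H}{H^1_\kappa(\Omega)}^2 = a(u_h - u_H, w)$ with $w := (I - J_H)(u_h - u_H) \in \SS_0^p(\TT_h)$. Expanding $a(u_h, w) = \int_{\Omega_H} f w$ (using $\supp f \subseteq \overline\Omega_H$) and integrating by parts elementwise in $a(u_H, w)$ exactly as in Step~1 of the proof of Theorem~\ref{thm:reliability} produces the usual sum of volume and jump residuals over $\TT_H$. By invariance, $w|_T = 0$ for every $T \in \TT_H \setminus \RR_{H,h}$, so the sum reduces to $\RR_{H,h}$. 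Cauchy--Schwarz combined with~\eqref{eq:local_approximation} and finite patch overlap then yields $\norm{u_h - u_H}{H^1_\kappa(\Omega)}^2 \lesssim \eta_H(\RR_{H,h})\,\norm{u_h - u_H}{H^1_\kappa(\Omega)}$, and division by the norm gives~\eqref{eq:discrete_reliability}. For the cardinality bound~\eqref{eq:bound_R}, each $T \in (\TT_H \cap \TT_h)$ with $\partial T \cap \Gamma_{H,h} \neq \emptyset$ is geometrically adjacent to newly activated elements in $\TT_h \setminus \TT_H$ on the other side of $\Gamma_{H,h}$, which in turn descend from bisecting an element of $\TT_H \setminus \TT_h$ via a conformity cascade of uniformly bounded depth under Maubach/NVB bisection; shape regularity then bounds the count per refined element. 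The second inequality is immediate since each element of $\TT_H \setminus \TT_h$ is bisected into at least two children in $\TT_h \setminus \TT_H$. The main obstacle will be rigorously establishing the local approximation estimate for $J_H$ at the artificial boundary, where the boundary-condition mismatch must be accommodated through the $\norm{\kappa\cdot}{\cdot}$ component of the $H^1_\kappa$-norm.
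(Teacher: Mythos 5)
Your proposal follows the paper's proof essentially verbatim in structure and ideas. The key ingredients coincide: the discrete Galerkin orthogonality coming from nestedness and $\supp f\subseteq\overline\Omega_H$, a Scott--Zhang-type quasi-interpolation whose averaging sets are chosen as faces shared by both $\TT_H$ and $\TT_h$ whenever possible, the invariance $J_H v|_T = v|_T$ on elements $T\in(\TT_H\cap\TT_h)\setminus\RR_{H,h}$, the reduction of the residual sum to $\RR_{H,h}$, and---crucially---the absorption of the $L^2$-mismatch at the artificial boundary into $\norm{\kappa\cdot}{\cdot}$ using $\kTm h_T\gtrsim 1$, which the paper carries out in display~\eqref{eq:dependency_kh}. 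Testing directly with $u_h-u_H$ rather than via the discrete inf-sup bound is an equivalent rephrasing given the symmetry of $a(\cdot,\cdot)$.

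Two small inaccuracies. First, you define $J_H$ purely by face-averaging; the paper allows the averaging set $\tau_{H,z}$ to be a face \emph{or an element}, which is needed for Lagrange nodes interior to an element (or interior to a lower-dimensional facet) when $p\ge 2$, so your construction as stated is incomplete for higher polynomial degree though the repair is routine. Second, your justification of the counting bound~\eqref{eq:bound_R} via a ``conformity cascade'' linking elements at $\Gamma_{H,h}$ back to $\TT_H\setminus\TT_h$ appeals to the specific refinement algorithm and is not established for the general pair $(\TT_H,\TT_h)$ allowed in the lemma; the paper's own argument instead bounds the boundary part of $\RR_{H,h}$ by $\#(\TT_h\setminus\TT_H)$ through a purely local adjacency count (each such $T$ touches an activated $T''\subseteq T'\in\widehat\TT_H\setminus\TT_H$, and the number of $\widehat\TT_H$-elements meeting $T'$ is bounded by shape regularity), and separately notes $\#(\TT_H\setminus\TT_h)\le\#(\TT_h\setminus\TT_H)$. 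You should replace the cascade heuristic by this direct overlap count.
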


\begin{proof}
We prove the assertion in three steps, proceeding similarly as in Theorem~\ref{thm:reliability}.

\noindent
\textbf{Step~1:}
Coercivity shows the discrete inf-sup stability
\begin{align*}
\norm{u_h - u_H}{H^1_\kappa(\Omega)}
\le
\sup_{v_h\in \SS_0^p(\TT_h)\setminus\{0\})} \frac{a(u_h - u_H, v_h)}{\norm{v_h}{H^1_\kappa(\Omega)}}.  
\end{align*}
Galerkin orthogonality gives that 
\begin{align*}
	a(u_h - u_H,v_h) = a(u_h - u_H,v_h - v_H) \quad \forall v_H \in \SS_0^p(\TT_H).
\end{align*}
Exploiting the definition of $u_h$ and $\supp f\subseteq \overline\Omega_H$, we split $a(u_h - u_H,v_h - v_H)$ as follows 
\begin{align*}
	&a(u_h - u_H,v_h - v_H) 
	\\
	&\qquad = \dual{f}{v_h - v_H}_{\Omega_H} - \dual{\kappa^2 u_H}{v_h - v_H}_{\Omega_H} - \dual{\nabla u_H}{\nabla (v_h - v_H)}_{\Omega_H}.
\end{align*}
Integration by parts shows that 
\begin{align*}
	&\dual{f}{v_h - v_H}_{\Omega_H} - \dual{\kappa^2 u_H}{v_h - v_H}_{\Omega_H} - \dual{\nabla u_H}{\nabla (v_h - v_H)}_{\Omega_H}
	\\
	&\quad = \sum_{T \in \TT_H} \dual{f}{v_h - v_H}_T - \dual{\kappa^2 u_H}{v_h - v_H}_T + \dual{\Delta u_H}{v_h - v_H}_T 
	\\
	&\qquad\qquad - \dual{\partial_{\bs{n}} u_H}{v_h - v_H}_{\partial T}
	\\
	&\quad = \sum_{T \in \TT_H} \dual{f - \kappa^2 u_H + \Delta u_H}{v_h - v_H}_T - \frac{1}{2} \dual{\jump{\partial_{\bs{n}} u_H}}{v_h - v_H}_{\partial T \cap \Omega_H} 
	\\
	&\qquad\qquad - \dual{\partial_{\bs{n}} u_H}{v_h - v_H}_{\partial T \cap \Gamma_{H,h}}.
\end{align*}
In contrast to the standard case of bounded domains, the last term $\dual{\partial_{\bs{n}} u_H}{v_h - v_H}_{\partial T \cap \Gamma_{H,h}}$ does not necessarily vanish, as $v_h - v_H$ is in general only $0$ on $\partial\Omega_h$ but not on $\partial\Omega_H$. 
Note the identities $(\partial T\cap\Omega_H) \cup (\partial T \cap \Gamma_{H,h}) = \partial T \setminus \partial \Omega_h = \partial T \cap \Omega_h \subseteq \partial T\cap\Omega$ and  $\partial_{\bs{n}} u_H = \jump{\partial_{\bs{n}} u_H}$ on $\partial T\cap\Gamma_{H,h}$. 
We apply the Cauchy--Schwarz inequality to see that 
\begin{align*}
	&\sum_{T \in \TT_H} \dual{f - \kappa^2 u_H + \Delta u_H}{v_h - v_H}_T - \frac{1}{2} \dual{\jump{\partial_{\bs{n}} u_H}}{v_h - v_H}_{\partial T \cap \Omega_H} 
	\\
	&\qquad- \dual{\partial_{\bs{n}} u_H}{v_h - v_H}_{\partial T \cap \Gamma_{H,h}}
	\\
	&\quad \le \sum_{T \in \TT_H} \Big\{ \Big(h_T^2 \norm{f - \kappa^2 u_H + \Delta u_H}{T}^2 + h_T \norm{\jump{\partial_{\bs{n}} u_H}}{\partial T\cap \Omega}^2\Big)^{1/2} 
	\\
	&\qquad \times  \Big( h_T^{-2} \norm{v_h - v_H}{T}^2 + h_T^{-1} \norm{v_h - v_H}{\partial T \cap \Omega}^2 \Big)^{1/2}\Big\}.
\end{align*}

\noindent
\textbf{Step~2:}
We conclude the proof of \eqref{eq:discrete_reliability} choosing $v_H := I_H v_h$ for some Scott--Zhang-type interpolation operator $I_H: H^1(\Omega) \to \SS_0^p(\TT_H)$
with the following local projection property
\begin{align}\label{eq:local_projection}
	\big((1-I_H) w_h\big)|_T = 0 
	\quad \forall w_h \in \SS_0^p(\TT_h), T \in \TT_H \cap \TT_h \text { with } \partial T\cap \Gamma_{H,h} = \emptyset
\end{align}
and the following local approximation property
\begin{align}\label{eq:local_approximation_sz}
	h_T^{-2} \norm{(1 - I_H) w}T^2 + h_T^{-1} \norm{(1 - I_H) w}{\partial T\cap\Omega}^2
	\lesssim \norm{w_h}{H^1_\kappa(\widehat\omega_H[T])}^2
	\quad \forall w \in H^1_0(\Omega), T\in\TT_H,
\end{align}
where $\widehat\omega_H[T] := \bigcup \set{T'\in\widehat\TT_H}{T\cap T' \neq \emptyset}$ denotes again the patch of $T$. 
Recall that the local approximation property was already required in the proof of reliability (Theorem~\ref{thm:reliability}); cf.~\eqref{eq:local_approximation}.
We construct such an operator in the following two substeps. 

\noindent
\textbf{Step~2.1:}
We consider the set of Lagrange nodes $\NN_H$ associated to the space of continuous piecewise polynomials of degree $p$ on $\TT_H$ along with the corresponding basis $\set{\phi_{H,z}}{z\in\NN_H}$.
For each node $z\in\NN_H$, let $\tau_{H,z}$ be either an element or a face in $\TT_H$ with $z\in\tau_{H,z}$ where we choose $\tau_{H,z}$ as interior face of $\TT_H$ and $\TT_h$ whenever such a face exists. 
Moreover, let $\phi_{H,z}^*\in\PP^p(\tau_{H,z})$ denote the dual basis function of $\phi_{H,z}$ with respect to the $L^2(\tau_{H,z})$ scalar product. 
Then, a suitable candidate is given by
\begin{align*}
	I_H w := \sum_{z\in\NN_H \setminus \partial\Omega_H} \int_{\tau_{H,z}} \phi_{H,z}^* w \d x \, \phi_{H,z}
	\quad \forall w \in H^1(\Omega).
\end{align*}
Let $w_h \in \SS_0^p(\TT_h)$ and let $T\in\TT_H \cap \TT_h$ with $\partial T\cap \Gamma_{H,h} = \emptyset$. 
Moreover, let $z\in\NN_T:=\NN_H \cap T=\NN_h \cap T$.
If $z$ lies in the interior of $T$ or on a $\TT_H$-interior face of $T$, then $w_h|_{\tau_{H,z}}\in\PP^p(\tau_{H,z})$ by our choice of $\tau_{H,z}$ and thus $(I_H w_h)(z) =   \int_{\tau_{H,z}} \phi_{H,z}^* w_h \d x  = w_h(z)$. 
If $z$ lies on a boundary face $F$ of $\partial\Omega_H$, then the assumption $\partial T\cap \Gamma_{H,h} = \partial T \cap (\partial\Omega_H \setminus\partial\Omega_h) = \emptyset$ guarantees that $F \subseteq \partial\Omega_h$, 
and thus $(I_H w_h)(z) = 0 = w_h(z)$ due to $z\in\partial\Omega_H$ and $w_h\in \SS_0^p(\TT_h)$. 
Since $z\in\NN_T$ was arbitrary, this shows that $(I_H w_h)|_T = w_h|_T$.

\noindent
\textbf{Step~2.2:}
A standard scaling argument shows the local $L^2$-stability
\begin{align}\label{eq:l2_stable_sz}
	\norm{I_H w}{T} \lesssim \norm{w}{L^2(\omega_H[T])} 
	\quad \forall w \in H^1(\Omega), T\in\TT_H,
\end{align}
where $\omega_H[T] := \bigcup \set{T'\in\TT_H}{T\cap T' \neq \emptyset}$. 
Let $w\in H_0^1(\Omega)$. 
If $T\cap \partial\Omega_H = \emptyset$, then $(I_H c)|_T = c$ for all $c\in\R$. 
In this case, we set $c_T:=|\widehat\omega_H[T]|^{-1} \int_{\widehat\omega_H[T]} w \d x$.
If $T\cap \partial\Omega_H\cap\partial\Omega \neq \emptyset$, we set $c_T:=0$. 
In either case, \eqref{eq:l2_stable_sz} and the Poincar\'e--Friedrichs inequality on element patches, e.g., ~\cite{cf00}, give that
\begin{align*}
	h_T^{-2}\norm{(1-I_H) w}{T}^2
	= h_T^{-2}\norm{(1-I_H) (w-c_T)}{T}^2
	\lesssim h_T^{-2}\norm{w-c_T}{\widehat\omega_H[T]}^2
	\lesssim \norm{\nabla w}{\widehat\omega_H[T]}^2.
\end{align*}
While~\eqref{eq:l2_stable_sz} is satisfied for the small patch $\omega_H[T]$, the latter argument requires $\widehat\omega_H[T]$, since, in contrast to $\omega_H[T]$, $\widehat\omega_H[T]$ is always a Lipschitz domain. 
Similarly, an inverse estimate yields that
\begin{align*}
	\norm{\nabla I_H w}{T}^2
	= \norm{\nabla I_H (w-c_T)}{T}^2
	\lesssim h_T^{-2}\norm{w-c_T}{\widehat\omega_H[T]}^2
	\lesssim \norm{\nabla w}{\widehat\omega_H[T]}^2.
\end{align*}
Together with the trace inequality, this shows~\eqref{eq:local_approximation_sz} if $T\cap \partial\Omega_H = \emptyset$ or $T\cap \partial\Omega_H \cap \partial\Omega \neq \emptyset$.
It remains to consider the non-standard case $T\cap \Gamma_H \neq \emptyset$. 
In this case, we apply the trace inequality and $\kTm h_T \gtrsim 1$,
followed by local $L^2$-stability~\eqref{eq:l2_stable_sz} and an inverse estimate to see that 
\begin{align}
\label{eq:dependency_kh}
\begin{split}
&h_T^{-2} \norm{(1 - I_H) w}T^2 + h_T^{-1} \norm{(1 - I_H) w}{\partial T\cap\Omega}^2
\\
&\qquad\qquad\lesssim (\kTm h_T)^{-2}(\kTm)^2\norm{(1-I_H) w}T^2 + \norm{\nabla (1-I_H)w}{T}^2
\lesssim \norm{w}{H^1_\kappa(\omega_H[T])}^2.
\end{split}
\end{align}
This concludes the proof of~\eqref{eq:discrete_reliability}. 

\noindent
\textbf{Step~3:}
It remains to show~\eqref{eq:bound_R}. 
Since refined elements are at least split once, it holds that 
\begin{align*}
	\#(\TT_H \setminus \TT_h)
	= \#\big(\TT_H \setminus (\TT_h|_{\Omega_H})\big)
	\le \#\TT_h|_{\Omega_H} - \#\TT_H
	\le \#\big((\TT_h|_{\Omega_H}) \setminus \TT_H\big)
	\le \#(\TT_h \setminus \TT_H),
\end{align*}
where as before $\TT_h|_{\Omega_H} := \set{T\in\TT_h}{T\subseteq \overline \Omega_H}$. 
Let $T\in\TT_H\cap \TT_h$ with $\partial T\cap \Gamma_{H,h} \neq \emptyset$.
This means that there exists $T'\in\widehat\TT_H\setminus\TT_H$ with $T\cap T' \neq \emptyset$ that is activated in $\TT_h$ in the sense that there exists $T''\in \TT_h\setminus\TT_H$ with $T''\subseteq T'$ and $T\cap T'' \neq \emptyset$.
Since the number of elements in $\widehat\TT_H$ having non-empty intersection with $T'$ is uniformly bounded depending only on the dimension $d$ and the shape regularity of $\widehat\TT_H$, we infer that 
\begin{align*}
	\# \set{T\in\TT_H\cap \TT_h}{\partial T\cap \Gamma_{H,h} \neq \emptyset} \lesssim \#(\TT_h \setminus \TT_H).
\end{align*}
Overall, we conclude \eqref{eq:bound_R} and thus the proof. 
\end{proof}

\begin{remark}[dependency on $\min \kTm h_T$]
\label{remark_kh}
For simplicity, we have established Lemma~\ref{lem:discrete_reliability} with
uniform constant~$\const{drel}$ under the assumption that
$\tau := \min\set{\kTm h_T}{T\in\TT_H, T\cap \Gamma_H \neq \emptyset}$
remains uniformly bounded from below. However, a careful inspection of
the proof shows that this assumption can be relaxed by including the
multiplicative factor $\tau^{-1}$ in~$\const{drel}$. This is, in particular,
clear from ~\eqref{eq:dependency_kh}. A similar comment applies to Theorem~\ref{thm:reliability}
and the constant $\const{rel}$. We further refer the reader to~\cite{chaumontfrelet_2025b},
where a similar analysis is explicitly carried out. 
\end{remark}

\begin{lemma}[general quasi-orthogonality (A4)] \label{lem:orthogonality}
Let $(\widehat\TT_\ell)_{\ell\in\N_0}$ be a sequence of conforming triangulations of $\Omega$ and $(\TT_\ell)_{\ell\in\N_0}$ a sequence of corresponding subtriangulations, i.e., $\TT_\ell\subseteq\widehat\TT_\ell$ for all $\ell\in\N_0$, such that $\widehat\TT_{\ell+1}$ is finer than $\widehat\TT_\ell$ and $\Omega_\ell\subseteq \Omega_{\ell+1}$ for all $\ell\in\N_0$. 
Suppose that $(\widehat\TT_\ell)_{\ell\in\N_0}$ is uniformly shape regular and the element sizes $h_T$ for $T\in\widehat\TT_\ell\setminus\TT_\ell$ are uniformly bounded from below (cf.~\eqref{eq:shape_regularity} and \eqref{eq:infsup_h}). 
Moreover, suppose that
\begin{align}
	\supp f \subseteq \overline \Omega_0.
\end{align}
Then, Theorem~\ref{thm:reliability} and Remark~\ref{rem:extended_estimator} yield that the estimators $(\eta_\ell)_{\ell\in\N_0}$ are reliable with uniform constant $\const{rel}>0$, and it holds that 
\begin{align}\label{eq:orthogonality}
	\sum_{k = \ell}^\infty \norm{u_{k+1} - u_k}{H^1_\kappa(\Omega)}^2
	\le \const{rel}^2 \eta_\ell^2
	\quad \forall \ell\in\N_0.
\end{align}
\end{lemma}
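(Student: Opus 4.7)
The plan is to reduce the claim to a telescoping Pythagorean identity in the energy norm, and then invoke the already-stated uniform reliability in the last step. The key structural observation is that the bilinear form $a(\cdot,\cdot)$ is symmetric and equals the inner product inducing $\|\cdot\|_{H^1_\kappa(\Omega)}$. Hence, once nestedness of the discrete spaces is established, standard Galerkin orthogonality will immediately give a clean Pythagorean decomposition of consecutive errors, which is what drives the whole estimate.

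First I would verify the nestedness $\SS_0^p(\TT_k)\subseteq\SS_0^p(\TT_{k+1})$ for all $k\in\N_0$. Since $\widehat\TT_{k+1}$ refines $\widehat\TT_k$ and $\Omega_k\subseteq\Omega_{k+1}$, any $v_k\in\SS_0^p(\TT_k)$ is a continuous piecewise polynomial of degree $p$ on $\TT_k$, vanishes on $\partial\Omega_k$ (in particular on the artificial part $\partial\Omega_k\setminus\partial\Omega_{k+1}$ that lies in $\Omega_{k+1}$), and therefore extends by zero on $\Omega_{k+1}\setminus\Omega_k$ to a continuous piecewise polynomial on $\TT_{k+1}$ with vanishing trace on $\partial\Omega_{k+1}$.

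Next, using symmetry of $a(\cdot,\cdot)$, Galerkin orthogonality $a(u-u_{k+1},v_{k+1})=0$ for all $v_{k+1}\in\SS_0^p(\TT_{k+1})$, and $u_{k+1}-u_k\in\SS_0^p(\TT_{k+1})$ by nestedness, I would expand
\begin{equation*}
\|u-u_k\|_{H^1_\kappa(\Omega)}^2
=
\|u-u_{k+1}\|_{H^1_\kappa(\Omega)}^2
+
\|u_{k+1}-u_k\|_{H^1_\kappa(\Omega)}^2.
\end{equation*}
Telescoping from $k=\ell$ to $k=N$ and sending $N\to\infty$ yields
\begin{equation*}
\sum_{k=\ell}^{\infty}\|u_{k+1}-u_k\|_{H^1_\kappa(\Omega)}^2
\le
\|u-u_\ell\|_{H^1_\kappa(\Omega)}^2.
\end{equation*}

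Finally, I would invoke uniform reliability. The support assumption $\supp f\subseteq\overline{\Omega}_0\subseteq\overline{\Omega}_\ell$ makes the extra term in~\eqref{eq:reliability} vanish, since for every $v\in H^1_0(\Omega)$ one has $\langle f,v\rangle_{\Omega\setminus\Omega_\ell}=0$; equivalently, the bound in Remark~\ref{rem:extended_estimator} gives $\|\kappa^{-1}f\|_{\Omega\setminus\Omega_\ell}=0$. The reliability constant $\const{rel}$ stays bounded along the whole sequence because $(\widehat\TT_\ell)_{\ell\in\N_0}$ is uniformly shape regular and, by the assumed uniform lower bound on $h_T$ for $T\in\widehat\TT_\ell\setminus\TT_\ell$ combined with shape regularity, the quantity $\min\set{\kTm h_T}{T\in\TT_\ell,\,T\cap\Gamma_\ell\neq\emptyset}$ entering Theorem~\ref{thm:reliability} is bounded from below independently of~$\ell$. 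Combining, $\|u-u_\ell\|_{H^1_\kappa(\Omega)}\le\const{rel}\,\eta_\ell$, which together with the telescoping bound proves~\eqref{eq:orthogonality}.

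\paragraph{Main obstacle.} The only delicate point is the uniformity of $\const{rel}$ along the sequence, but it is handled exactly by the hypotheses on~$\widehat\TT_\ell$. Everything else (Pythagoras, telescoping) is essentially free from the symmetry of $a(\cdot,\cdot)$; in the non-symmetric case sketched in Remark~\ref{rem:nonsymmetric} this direct identity would have to be replaced by the more intricate quasi-orthogonality argument of~\cite{feischl22}.
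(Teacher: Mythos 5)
Your proposal is correct and follows essentially the same route as the paper's proof: nestedness of the discrete spaces plus symmetry of $a(\cdot,\cdot)$ gives the Pythagorean identity, which telescopes to $\|u-u_\ell\|_{H^1_\kappa(\Omega)}^2$, and uniform reliability (valid since $\supp f\subseteq\overline\Omega_0$ kills the truncation term and the mesh hypotheses keep $\const{rel}$ bounded) closes the argument. You simply spell out the nestedness and the uniformity of $\const{rel}$ more explicitly than the paper does.
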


\begin{proof}
The proof follows from Galerkin orthogonality as well as reliability (Theorem~\ref{thm:reliability}):
\begin{equation*}
	\sum_{k = \ell}^\infty \norm{u_{k+1} - u_k}{H^1_\kappa(\Omega)}^2
	= \sum_{k = \ell}^\infty \norm{u - u_k}{H^1_\kappa(\Omega)}^2 - \norm{u - u_{k+1}}{H^1_{\kappa}(\Omega)}^2
	\le \norm{u - u_\ell}{H^1_\kappa(\Omega)}^2
	\le \const{rel}^2 \eta_\ell^2.
\end{equation*}
This concludes the proof.
\end{proof}

\begin{remark}[non-symmetric bilinear form] \label{rem:nonsymmetric}
If the considered PDE also contains an advection term, the resulting bilinear $a(\cdot,\cdot)$ is no longer symmetric, and the proof of general quasi-orthogonality {\rm(A4)} does not carry over. 
On bounded domains, general quasi-orthogonality is proved in~\cite{ffp14} using a compactness argument in this case. 
However, on unbounded domains, lower-order terms are \emph{not} compact perturbations with respect to $H_0^1(\Omega)$. 
Instead, the analysis of \cite{feischl22} is applicable, provided that $a(\cdot,\cdot)$ is coercive. 
For some $\delta>0$, this yields that
\begin{align}
	\sum_{k = \ell}^{\ell+N} \norm{u_{k+1} - u_k}{H^1(\Omega)}^2
	\lesssim N^{1-\delta} \norm{u - u_\ell}{H^1(\Omega)}^2
	\lesssim N^{1-\delta} \eta_\ell^2
	\quad \forall \ell,N\in\N_0,
\end{align}
which is sufficient to derive R-linear and thus optimal convergence; see \cite{feischl22} and Section~\ref{sec:proofs} for details.
\end{remark}

\section{Proof of R-linear and optimal convergence} \label{sec:proofs}

\begin{proof}[Proof of R-linear convergence (Theorem~\ref{thm:linear_convergence})]
Algorithm~\ref{alg:adaptive} particularly guarantees that all marked elements are refined, i.e., $\MM_\ell \subseteq \TT_\ell \setminus \TT_{\ell+1}$. 
Moreover, refined elements are at least bisected once. 
Together with D\"orfler marking and stability~(A1) and reduction~(A2) with $q_{\rm red} = 2^{-1/(2d)}$ proven in Lemma~\ref{lem:stability} and Lemma~\ref{lem:reduction}, a standard argument yields for all (sufficiently small) $\delta>0$ the estimator reduction
\begin{align*}
	\eta_{\ell+1}^2 
	& \le  (1+\delta) \big(1- (1-q_{\rm red}^2)\theta\big)  \eta_\ell^2  + 2(1+\delta^{-1}) \const{stab}^2\norm{u_{\ell+1} - u_\ell}{H^1_\kappa(\Omega)}^2;
\end{align*}
see, e.g., \cite[Lemma~4.7]{cfpp14}.
Combining this with general quasi-orthogonality (A4) proven in Lemma~\ref{lem:orthogonality}, elementary calculus gives the desired result; see, e.g., \cite[Proposition~4.10]{cfpp14}. 
Regarding the dependence of the constants involved, we note that $\min\set{\kTm h_T}{T\in\TT_H, T\cap \Gamma_H \neq \emptyset}$ for $\TT_H\in\T_H$ is uniformly bounded from below
in terms of $\inf\set{\kappa_{\widehat\omega_0[T]}^- h_T}{T\in \widehat{\TT}_0\setminus \TT_0}$ and the shape regularity of $\widehat\TT_0$.
\end{proof}

\begin{proof}[Proof of optimal convergence (Theorem~\ref{thm:optimal_convergence})]
We start with the closure estimate from Proposition~\ref{prop:closure}, i.e., 
\begin{align*}
	\# (\TT_\ell \setminus \TT_0)
	=\# (\widehat \TT_\ell \setminus \widehat \TT_0)
	\le \const{clos} \sum_{j=0}^{\ell-1} \# \MM_j
	\quad \forall \ell\in\N_0.
\end{align*}
Now, we estimate $\# \MM_j$ for $j\in\N_0$. 
Without loss of generality, we may assume that $\eta_j>0$, as then $\MM_j = \emptyset$. 
By definition of the approximation class, for any $0<q<1$, there exists $\TT_H \in \T$ with 
\begin{align*}
	\eta_H &\le \const{mon}^{-1} q \eta_j,
	\\
	\#(\TT_H \setminus \TT_0) &\le \norm{u}{\A_s}^{1/s} \const{mon}^{1/s} q^{-1/s} \eta_j^{-1/s}.
\end{align*}
Quasi-monotonicity of the estimator~\eqref{eq:quasi-monotonicity} and the overlay estimate from Proposition~\ref{prop:overlay} show for $\widehat\TT_{j_+} := \widehat\TT_H \oplus \widehat\TT_j$ that
\begin{align}\label{eq:contraction}
	\eta_{j_+} \le \const{mon} \eta_H \le q \eta_j,
\end{align}
and
\begin{align*}
	\#(\TT_{j_+} \setminus \TT_j) = \#(\widehat \TT_{j_+} \setminus \widehat \TT_j) 
	\le 2\# (\widehat \TT_H \setminus \widehat \TT_0)
	= 2\# (\TT_H \setminus \TT_0)
	 &\le 2\norm{u}{\A_s}^{1/s} \const{mon}^{1/s} q^{-1/s} \eta_j^{-1/s}.
\end{align*}

Choosing $0<q<1$ sufficiently small, \eqref{eq:contraction} together with stability (A1) from Lemma~\ref{lem:stability} and discrete reliability (A3) from Lemma~\ref{lem:discrete_reliability} readily yields D\"orfler marking for the set $\RR_{j,{j_+}}\supseteq \TT_j \setminus \TT_{j_+}$ from (A3), i.e., 
\begin{align*}
	\theta \eta_j^2 \le \eta_j(\RR_{j,{j_+}})^2,
\end{align*}
which is known as optimality of D\"orfler marking in the literature; see, e.g., \cite[Proposition~4.12]{cfpp14}. 
Minimality of $\MM_j$ and \eqref{eq:bound_R} hence imply that
\begin{align*}
	\# \MM_j \le \# \RR_{j,{j_+}} \le \const{ref} \, \#(\TT_{j_+}\setminus\TT_j). 
\end{align*}

Overall, we conclude that 
\begin{align*}
	\# (\TT_\ell \setminus \TT_0)
	=\# (\widehat \TT_\ell \setminus \widehat \TT_0)
	&\le \const{clos} \sum_{j=0}^{\ell-1} \# \MM_j
	\le \const{clos} \const{ref} \sum_{j=0}^{\ell-1} \#(\TT_{j_+} \setminus \TT_j) 
	\\
	&\le 2\norm{u}{\A_s}^{1/s} \const{clos}\const{ref} \const{mon}^{1/s} q^{-1/s} \sum_{j=0}^{\ell-1} \eta_j^{-1/s}
	\\
	&\le 2\norm{u}{\A_s}^{1/s} \const{clos}\const{ref} \const{mon}^{1/s} q^{-1/s} \const{lin}^{1/s} (1 - q_{\rm lin}^{1/s})^{-1} \eta_\ell^{-1/s}
	\,\,\forall \ell\in\N_0,
\end{align*}
where the last inequality follows from linear convergence~\eqref{eq:linear_convergence}; see, e.g., \cite[Lemma~4.9]{cfpp14} for the elementary argument. 
Simple recasting concludes the proof.
\end{proof}

\section{Numerical experiments} \label{sec:numerics}

We now present a set of numerical examples that illustrate
the theory developed above. Our goal is in particular to
observe the optimal convergence rates predicted in
Theorem~\ref{thm:optimal_convergence}, as well as the reliability
and efficiency properties of the estimator established
in Theorems~\ref{thm:reliability} and~\ref{thm:efficiency}.

\subsection{Settings}

For all examples, we employ Algorithm~\ref{alg:adaptive}, where
we describe the generic setting in the following.

\subsubsection{Meshes}

In all our examples, the initial mesh $\widehat\TT_0$ is based on a Cartesian grid,
where each square is subdivided into $4$ congruent triangles. 
We will examine different mesh sizes below.
The origin is always a vertex of the Cartesian grid, and we select the longest
edge of each triangle (the one directly originating from the Cartesian
grid) as reference edge for refinement. Note that this indeed
leads to an admissible triangulation.

\subsubsection{D\"orfler marking}

We employ D\"orfler marking with $\theta = 0.2$.

\subsubsection{Iteration counts}

In all examples, we let the adaptive loop run for $100$ iterations (up to $\ell=100$).

\subsection{Smoothed fundamental solution}

We first examine the case where $\Omega = \mathbb{R}^2$
and $\kappa$ is constant. We consider a smooth radial solution given by
\begin{equation*}
u(x) = \chi(|x|)K_0(\kappa|x|),
\end{equation*}
where $K_0$ is the modified Bessel function of the second kind (i.e.,
the fundamental solution of the PDE),
and $\chi: \mathbb R_+ \to \mathbb R_+$ is a smooth transition
function such that $\chi(r) = 0$ for $r \leq 0.1$ and $\chi(r) = 1$
for $r \geq 0.9$. In the transition region $0.1 < r < 0.9$, $\chi$ is
defined as the unique polynomial of degree $7$ that ensures $C^3$
smoothness. The corresponding right-hand side $f$ is supported in
the annulus $0.1 \leq |x| \leq 0.9$. We consider three values of $\kappa^2$,
namely, $1$, $0.1$ and $0.01$.

For this benchmark, no localized refinements are required to obtain optimal
convergence rates. Nevertheless, this test case is of interest as it assesses
whether the adaptive algorithm balances the internal refinements and expansion of
the computational domain $\Omega_H$.

The initial mesh $\TT_0$ consists of the $4$ squares touching the origin,
each subdivided into 4 triangles. We examine different mesh sizes, namely
$h_0 = 1$, $4$ and $8$.

Figures~\ref{figure_smooth_P0},~\ref{figure_smooth_P1}, and~\ref{figure_smooth_P2}
respectively display the convergence histories for $p=1$, $2$, and $3$. 
In all cases, we observe the optimal convergence rate for the error in terms of the number
of degrees of freedom in the finite element space. For small values of $\kappa$
and small initial mesh size, we observe a pre-asymptotic regime that is more
pronounced for higher polynomial degrees. This is in accordance with our
requirements on the initial triangulation, and the $p$-explicit analysis
carried out in~\cite{chaumontfrelet_2025b} for a flux-equilibrated estimator.
This pre-asymptotic regime also matches the intuition: if many (unnecessarily) small
elements need  be added to push the artificial boundary $\Gamma_H$, this expansion operation
introduces degrees of freedom that are not immediately useful.

\begin{figure}
\begin{center}
\begin{minipage}{.45\linewidth}
\begin{tikzpicture}[scale=0.95]
\begin{axis}
[
	xlabel = {$N_{\rm dofs}$},
	ylabel = {$\|u-u_\ell\|_{H^1_\kappa(\Omega)}$},
	xmode = log,
	ymode = log,
	width = \linewidth
]

\plot[ultra thick,solid ,color=black,mark=none] table[x=dof,y=err] {data/fond/P0_mu_1.00_hini_008.txt};
\plot[ultra thick,dashed,color=blue ,mark=none] table[x=dof,y=err] {data/fond/P0_mu_1.00_hini_004.txt};
\plot[ultra thick,dotted,color=red  ,mark=none] table[x=dof,y=err] {data/fond/P0_mu_1.00_hini_001.txt};

\node[anchor=north east] at (rel axis cs: 0.9,0.9) {$\kappa^2 = 1$};

\plot[dashed,domain=1.e4:8.e6] {5.*x^(-0.5)};
\SlopeTriangle{0.5}{-0.15}{0.25}{-0.5}{$N_{\rm dofs}^{-1/2}$}{}

\end{axis}
\end{tikzpicture}
\end{minipage}
\begin{minipage}{.45\linewidth}
\begin{tikzpicture}[scale=0.95]
\begin{axis}
[
	xlabel = {$N_{\rm dofs}$},
	ylabel = {$\|u-u_\ell\|_{H^1_\kappa(\Omega)}$},
	xmode = log,
	ymode = log,
	width = \linewidth
]

\plot[ultra thick,solid ,color=black,mark=none] table[x=dof,y=err] {data/fond/P0_mu_0.10_hini_008.txt};
\plot[ultra thick,dashed,color=blue ,mark=none] table[x=dof,y=err] {data/fond/P0_mu_0.10_hini_004.txt};
\plot[ultra thick,dotted,color=red  ,mark=none] table[x=dof,y=err] {data/fond/P0_mu_0.10_hini_001.txt};

\node[anchor=north east] at (rel axis cs: 0.9,0.9) {$\kappa^2 = 0.1$};

\plot[dashed,domain=1.e4:5.e6] {10.*x^(-0.5)};
\SlopeTriangle{0.5}{-0.15}{0.25}{-0.5}{$N_{\rm dofs}^{-1/2}$}{}

\end{axis}
\end{tikzpicture}
\end{minipage}

\begin{minipage}{.45\linewidth}
\begin{tikzpicture}[scale=0.95]
\begin{axis}
[
	xlabel = {$N_{\rm dofs}$},
	ylabel = {$\|u-u_\ell\|_{H^1_\kappa(\Omega)}$},
	xmode = log,
	ymode = log,
	width = \linewidth
]

\plot[ultra thick,solid ,color=black,mark=none] table[x=dof,y=err] {data/fond/P0_mu_0.01_hini_008.txt};
\plot[ultra thick,dashed,color=blue ,mark=none] table[x=dof,y=err] {data/fond/P0_mu_0.01_hini_004.txt};
\plot[ultra thick,dotted,color=red  ,mark=none] table[x=dof,y=err] {data/fond/P0_mu_0.01_hini_001.txt};

\node[anchor=north east] at (rel axis cs: 0.9,0.9) {$\kappa^2 = 0.01$};

\plot[dashed,domain=5.e3:2.e6] {20.*x^(-0.5)};
\SlopeTriangle{0.5}{-0.15}{0.25}{-0.5}{$N_{\rm dofs}^{-1/2}$}{}

\end{axis}
\end{tikzpicture}
\end{minipage}
\begin{minipage}{.45\linewidth}
\hspace{3cm}
\begin{tikzpicture}[scale=0.95]
\draw[ultra thick,solid ,color=black,mark=none] (0,0) -- (1,0); \draw (1,0) node[anchor=west] {$h_0 = 8$};
\draw[ultra thick,dashed,color=blue ,mark=none] (0,1) -- (1,1); \draw (1,1) node[anchor=west] {$h_0 = 4$};
\draw[ultra thick,dotted,color=red  ,mark=none] (0,2) -- (1,2); \draw (1,2) node[anchor=west] {$h_0 = 1$};
\end{tikzpicture}
\end{minipage}
\caption{Convergence history for the smoothed fundamental solution and $p=1$.}
\label{figure_smooth_P0}
\end{center}
\end{figure}
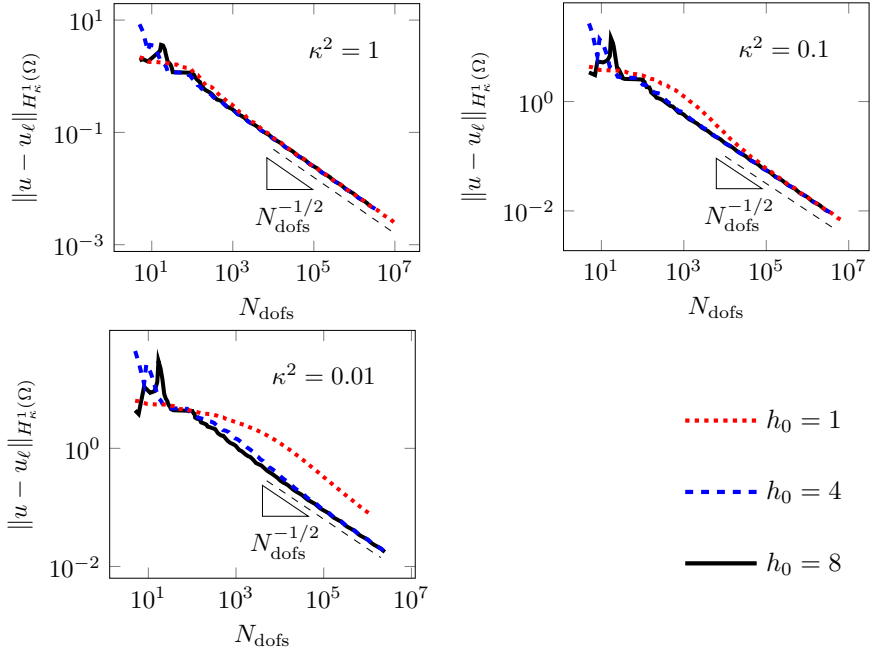
\begin{figure}
\begin{center}
\begin{minipage}{.45\linewidth}
\begin{tikzpicture}[scale=0.95]
\begin{axis}
[
	xlabel = {$N_{\rm dofs}$},
	ylabel = {$\|u-u_\ell\|_{H^1_\kappa(\Omega)}$},
	xmode = log,
	ymode = log,
	width = \linewidth
]

\plot[ultra thick,solid ,color=black,mark=none] table[x=dof,y=err] {data/fond/P1_mu_1.00_hini_008.txt};
\plot[ultra thick,dashed,color=blue ,mark=none] table[x=dof,y=err] {data/fond/P1_mu_1.00_hini_004.txt};
\plot[ultra thick,dotted,color=red  ,mark=none] table[x=dof,y=err] {data/fond/P1_mu_1.00_hini_001.txt};

\node[anchor=north east] at (rel axis cs: 0.9,0.9) {$\kappa^2 = 1$};

\plot[dashed,domain=5.e3:2.e6] {40*x^(-1.0)};
\SlopeTriangle{0.5}{-0.15}{0.25}{-1.0}{$N_{\rm dofs}^{-1}$}{}

\end{axis}
\end{tikzpicture}
\end{minipage}
\begin{minipage}{.45\linewidth}
\begin{tikzpicture}[scale=0.95]
\begin{axis}
[
	xlabel = {$N_{\rm dofs}$},
	ylabel = {$\|u-u_\ell\|_{H^1_\kappa(\Omega)}$},
	xmode = log,
	ymode = log,
	width = \linewidth
]

\plot[ultra thick,solid ,color=black,mark=none] table[x=dof,y=err] {data/fond/P1_mu_0.10_hini_008.txt};
\plot[ultra thick,dashed,color=blue ,mark=none] table[x=dof,y=err] {data/fond/P1_mu_0.10_hini_004.txt};
\plot[ultra thick,dotted,color=red  ,mark=none] table[x=dof,y=err] {data/fond/P1_mu_0.10_hini_001.txt};

\node[anchor=north east] at (rel axis cs: 0.9,0.9) {$\kappa^2 = 0.1$};

\plot[dashed,domain=5.e3:1.e6] {50*x^(-1.0)};
\SlopeTriangle{0.5}{-0.15}{0.2}{-1.0}{$N_{\rm dofs}^{-1}$}{}

\end{axis}
\end{tikzpicture}
\end{minipage}

\begin{minipage}{.45\linewidth}
\begin{tikzpicture}[scale=0.95]
\begin{axis}
[
	xlabel = {$N_{\rm dofs}$},
	ylabel = {$\|u-u_\ell\|_{H^1_\kappa(\Omega)}$},
	xmode = log,
	ymode = log,
	width = \linewidth
] \plot[ultra thick,solid ,color=black,mark=none] table[x=dof,y=err] {data/fond/P1_mu_0.01_hini_008.txt};
\plot[ultra thick,dashed,color=blue ,mark=none] table[x=dof,y=err] {data/fond/P1_mu_0.01_hini_004.txt};
\plot[ultra thick,dotted,color=red  ,mark=none] table[x=dof,y=err] {data/fond/P1_mu_0.01_hini_001.txt};

\node[anchor=north east] at (rel axis cs: 0.9,0.9) {$\kappa^2 = 0.01$};

\plot[dashed,domain=5.e3:5.e5] {100*x^(-1.0)};
\SlopeTriangle{0.525}{-0.15}{0.2}{-1.0}{$N_{\rm dofs}^{-1}$}{}

\end{axis}
\end{tikzpicture}
\end{minipage}
\begin{minipage}{.45\linewidth}
\hspace{3cm}
\begin{tikzpicture}[scale=0.95]
\draw[ultra thick,solid ,color=black,mark=none] (0,0) -- (1,0); \draw (1,0) node[anchor=west] {$h_0 = 8$};
\draw[ultra thick,dashed,color=blue ,mark=none] (0,1) -- (1,1); \draw (1,1) node[anchor=west] {$h_0 = 4$};
\draw[ultra thick,dotted,color=red  ,mark=none] (0,2) -- (1,2); \draw (1,2) node[anchor=west] {$h_0 = 1$};
\end{tikzpicture}
\end{minipage}
\caption{Convergence history for the smoothed fundamental solution and $p=2$.}
\label{figure_smooth_P1}
\end{center}
\end{figure}
\begin{figure}
\begin{center}
\begin{minipage}{.45\linewidth}
\begin{tikzpicture}[scale=0.95]
\begin{axis}
[
	xlabel = {$N_{\rm dofs}$},
	ylabel = {$\|u-u_\ell\|_{H^1_\kappa(\Omega)}$},
	xmode = log,
	ymode = log,
	width = \linewidth
]

\plot[ultra thick,solid ,color=black,mark=none] table[x=dof,y=err] {data/fond/P2_mu_1.00_hini_008.txt};
\plot[ultra thick,dashed,color=blue ,mark=none] table[x=dof,y=err] {data/fond/P2_mu_1.00_hini_004.txt};
\plot[ultra thick,dotted,color=red  ,mark=none] table[x=dof,y=err] {data/fond/P2_mu_1.00_hini_001.txt};

\node[anchor=north east] at (rel axis cs: 0.9,0.9) {$\kappa^2 = 1$};

\plot[dashed,domain=1.e3:5.e5] {500*x^(-1.5)};
\SlopeTriangle{0.5}{-0.15}{0.275}{-1.5}{$N_{\rm dofs}^{-3/2}$}{}

\end{axis}
\end{tikzpicture}
\end{minipage}
\begin{minipage}{.45\linewidth}
\begin{tikzpicture}[scale=0.95]
\begin{axis}
[
	xlabel = {$N_{\rm dofs}$},
	ylabel = {$\|u-u_\ell\|_{H^1_\kappa(\Omega)}$},
	xmode = log,
	ymode = log,
	width = \linewidth
]

\plot[ultra thick,solid ,color=black,mark=none] table[x=dof,y=err] {data/fond/P2_mu_0.10_hini_008.txt};
\plot[ultra thick,dashed,color=blue ,mark=none] table[x=dof,y=err] {data/fond/P2_mu_0.10_hini_004.txt};
\plot[ultra thick,dotted,color=red  ,mark=none] table[x=dof,y=err] {data/fond/P2_mu_0.10_hini_001.txt};

\node[anchor=north east] at (rel axis cs: 0.9,0.9) {$\kappa^2 = 0.1$};

\plot[dashed,domain=1.e3:5.e5] {1000*x^(-1.5)};
\SlopeTriangle{0.5}{-0.15}{0.275}{-1.5}{$N_{\rm dofs}^{-3/2}$}{}

\end{axis}
\end{tikzpicture}
\end{minipage}

\begin{minipage}{.45\linewidth}
\begin{tikzpicture}[scale=0.95]
\begin{axis}
[
	xlabel = {$N_{\rm dofs}$},
	ylabel = {$\|u-u_\ell\|_{H^1_\kappa(\Omega)}$},
	xmode = log,
	ymode = log,
	width = \linewidth
]

\plot[ultra thick,solid ,color=black,mark=none] table[x=dof,y=err] {data/fond/P2_mu_0.01_hini_008.txt};
\plot[ultra thick,dashed,color=blue ,mark=none] table[x=dof,y=err] {data/fond/P2_mu_0.01_hini_004.txt};
\plot[ultra thick,dotted,color=red  ,mark=none] table[x=dof,y=err] {data/fond/P2_mu_0.01_hini_001.txt};

\node[anchor=north east] at (rel axis cs: 0.9,0.9) {$\kappa^2 = 0.01$};

\plot[dashed,domain=1.e3:3.e5] {5000*x^(-1.5)};
\SlopeTriangle{0.5}{-0.15}{0.275}{-1.5}{$N_{\rm dofs}^{-3/2}$}{}

\end{axis}
\end{tikzpicture}
\end{minipage}
\begin{minipage}{.45\linewidth}
\hspace{3cm}
\begin{tikzpicture}[scale=0.95]
\draw[ultra thick,solid ,color=black,mark=none] (0,0) -- (1,0); \draw (1,0) node[anchor=west] {$h_0 = 8$};
\draw[ultra thick,dashed,color=blue ,mark=none] (0,1) -- (1,1); \draw (1,1) node[anchor=west] {$h_0 = 4$};
\draw[ultra thick,dotted,color=red  ,mark=none] (0,2) -- (1,2); \draw (1,2) node[anchor=west] {$h_0 = 1$};
\end{tikzpicture}
\end{minipage}
\caption{Convergence history for the smoothed fundamental solution and $p=3$.}
\label{figure_smooth_P2}
\end{center}
\end{figure}

Let us note that, for the coarsest initial mesh, the error does not decrease
mono\-ton\-ous\-ly as it should (since the finite element spaces are nested). This
behavior occurs only in the very first few iterations and is an artifact of numerical quadrature.
Indeed, although we do employ quadrature schemes of higher-order than the expected
convergence rate, they can still induce noticeable errors on very coarse meshes.

Figures~\ref{figure_smooth_P0_eff},~\ref{figure_smooth_P1_eff}, and~\ref{figure_smooth_P2_eff},
display the effectivity indices. As expected, the ratio stabilizes to a fixed value for
fine meshes. This asymptotic value increases with $p$, which is also natural for residual-based
estimators (note that, in addition, we did not attempt to obtain optimal $p$-scalings). 
We finally note that there is a preasymptotic regime where the error is ``underestimated'' as compared to
the asymptotic ratio. This preasymptotic range is more pronounced for small values of
$\kappa h_0$, which is in agreement with the theory
(as explained in Remark~\ref{remark_kh}, the constants
$\const{rel}$ and $\const{drel}$ contain a factor $(\kappa h_0)^{-1}$).

Similar to our comment above, the very low effectivity indices observed in the first few
iterations for $p=1$ are due to the use of numerical integration.

\begin{figure}
\begin{center}
\begin{minipage}{.45\linewidth}
\begin{tikzpicture}[scale=0.95]
\begin{axis}
[
	xlabel = {$N_{\rm dofs}$},
	ylabel = {$\eta_\ell/\|u-u_\ell\|_{H^1_\kappa(\Omega)}$},
	xmode = log,
	ymode = log,
	ymin  = 1,
	width = \linewidth
]

\plot[ultra thick,solid ,color=black,mark=none] table[x=dof,y expr=\thisrow{est}/\thisrow{err}] {data/fond/P0_mu_1.00_hini_008.txt};
\plot[ultra thick,dashed,color=blue ,mark=none] table[x=dof,y expr=\thisrow{est}/\thisrow{err}] {data/fond/P0_mu_1.00_hini_004.txt};
\plot[ultra thick,dotted,color=red  ,mark=none] table[x=dof,y expr=\thisrow{est}/\thisrow{err}] {data/fond/P0_mu_1.00_hini_001.txt};

\node[anchor=north east] at (rel axis cs: 0.9,0.9) {$\kappa^2 = 1$};

\end{axis}
\end{tikzpicture}
\end{minipage}
\begin{minipage}{.45\linewidth}
\begin{tikzpicture}[scale=0.95]
\begin{axis}
[
	xlabel = {$N_{\rm dofs}$},
	ylabel = {$\eta_\ell/\|u-u_\ell\|_{H^1_\kappa(\Omega)}$},
	xmode = log,
	ymode = log,
	ymin  = 1,
	width = \linewidth
]

\plot[ultra thick,solid ,color=black,mark=none] table[x=dof,y expr=\thisrow{est}/\thisrow{err}] {data/fond/P0_mu_0.10_hini_008.txt};
\plot[ultra thick,dashed,color=blue ,mark=none] table[x=dof,y expr=\thisrow{est}/\thisrow{err}] {data/fond/P0_mu_0.10_hini_004.txt};
\plot[ultra thick,dotted,color=red  ,mark=none] table[x=dof,y expr=\thisrow{est}/\thisrow{err}] {data/fond/P0_mu_0.10_hini_001.txt};

\node[anchor=north east] at (rel axis cs: 0.9,0.9) {$\kappa^2 = 0.1$};

\end{axis}
\end{tikzpicture}
\end{minipage}

\begin{minipage}{.45\linewidth}
\begin{tikzpicture}[scale=0.95]
\begin{axis}
[
	xlabel = {$N_{\rm dofs}$},
	ylabel = {$\eta_\ell/\|u-u_\ell\|_{H^1_\kappa(\Omega)}$},
	xmode = log,
	ymode = log,
	ymin  = 1,
	width = \linewidth
]

\plot[ultra thick,solid ,color=black,mark=none] table[x=dof,y expr=\thisrow{est}/\thisrow{err}] {data/fond/P0_mu_0.01_hini_008.txt};
\plot[ultra thick,dashed,color=blue ,mark=none] table[x=dof,y expr=\thisrow{est}/\thisrow{err}] {data/fond/P0_mu_0.01_hini_004.txt};
\plot[ultra thick,dotted,color=red  ,mark=none] table[x=dof,y expr=\thisrow{est}/\thisrow{err}] {data/fond/P0_mu_0.01_hini_001.txt};

\node[anchor=north east] at (rel axis cs: 0.9,0.9) {$\kappa^2 = 0.01$};

\end{axis}
\end{tikzpicture}
\end{minipage}
\begin{minipage}{.45\linewidth}
\hspace{3cm}
\begin{tikzpicture}[scale=0.95]
\draw[ultra thick,solid ,color=black,mark=none] (0,0) -- (1,0); \draw (1,0) node[anchor=west] {$h_0 = 8$};
\draw[ultra thick,dashed,color=blue ,mark=none] (0,1) -- (1,1); \draw (1,1) node[anchor=west] {$h_0 = 4$};
\draw[ultra thick,dotted,color=red  ,mark=none] (0,2) -- (1,2); \draw (1,2) node[anchor=west] {$h_0 = 1$};
\end{tikzpicture}
\end{minipage}
\caption{Effectivity indices for the smoothed fundamental solution and $p=1$.}
\label{figure_smooth_P0_eff}
\end{center}
\end{figure}
\begin{figure}
\begin{center}
\begin{minipage}{.45\linewidth}
\begin{tikzpicture}[scale=0.95]
\begin{axis}
[
	xlabel = {$N_{\rm dofs}$},
	ylabel = {$\eta_\ell/\|u-u_\ell\|_{H^1_\kappa(\Omega)}$},
	xmode = log,
	ymode = log,
	width = \linewidth
]

\plot[ultra thick,solid ,color=black,mark=none] table[x=dof,y expr=\thisrow{est}/\thisrow{err}] {data/fond/P1_mu_1.00_hini_008.txt};
\plot[ultra thick,dashed,color=blue ,mark=none] table[x=dof,y expr=\thisrow{est}/\thisrow{err}] {data/fond/P1_mu_1.00_hini_004.txt};
\plot[ultra thick,dotted,color=red  ,mark=none] table[x=dof,y expr=\thisrow{est}/\thisrow{err}] {data/fond/P1_mu_1.00_hini_001.txt};

\node[anchor=north east] at (rel axis cs: 0.9,0.9) {$\kappa^2 = 1$};

\end{axis}
\end{tikzpicture}
\end{minipage}
\begin{minipage}{.45\linewidth}
\begin{tikzpicture}[scale=0.95]
\begin{axis}
[
	xlabel = {$N_{\rm dofs}$},
	ylabel = {$\eta_\ell/\|u-u_\ell\|_{H^1_\kappa(\Omega)}$},
	xmode = log,
	ymode = log,
	width = \linewidth
]

\plot[ultra thick,solid ,color=black,mark=none] table[x=dof,y expr=\thisrow{est}/\thisrow{err}] {data/fond/P1_mu_0.10_hini_008.txt};
\plot[ultra thick,dashed,color=blue ,mark=none] table[x=dof,y expr=\thisrow{est}/\thisrow{err}] {data/fond/P1_mu_0.10_hini_004.txt};
\plot[ultra thick,dotted,color=red  ,mark=none] table[x=dof,y expr=\thisrow{est}/\thisrow{err}] {data/fond/P1_mu_0.10_hini_001.txt};

\node[anchor=north east] at (rel axis cs: 0.9,0.9) {$\kappa^2 = 0.1$};

\end{axis}
\end{tikzpicture}
\end{minipage}

\begin{minipage}{.45\linewidth}
\begin{tikzpicture}[scale=0.95]
\begin{axis}
[
	xlabel = {$N_{\rm dofs}$},
	ylabel = {$\eta_\ell/\|u-u_\ell\|_{H^1_\kappa(\Omega)}$},
	xmode = log,
	ymode = log,
	width = \linewidth
]

\plot[ultra thick,solid ,color=black,mark=none] table[x=dof,y expr=\thisrow{est}/\thisrow{err}] {data/fond/P1_mu_0.01_hini_008.txt};
\plot[ultra thick,dashed,color=blue ,mark=none] table[x=dof,y expr=\thisrow{est}/\thisrow{err}] {data/fond/P1_mu_0.01_hini_004.txt};
\plot[ultra thick,dotted,color=red  ,mark=none] table[x=dof,y expr=\thisrow{est}/\thisrow{err}] {data/fond/P1_mu_0.01_hini_001.txt};

\node[anchor=north east] at (rel axis cs: 0.9,0.9) {$\kappa^2 = 0.01$};

\end{axis}
\end{tikzpicture}
\end{minipage}
\begin{minipage}{.45\linewidth}
\hspace{3cm}
\begin{tikzpicture}[scale=0.95]
\draw[ultra thick,solid ,color=black,mark=none] (0,0) -- (1,0); \draw (1,0) node[anchor=west] {$h_0 = 8$};
\draw[ultra thick,dashed,color=blue ,mark=none] (0,1) -- (1,1); \draw (1,1) node[anchor=west] {$h_0 = 4$};
\draw[ultra thick,dotted,color=red  ,mark=none] (0,2) -- (1,2); \draw (1,2) node[anchor=west] {$h_0 = 1$};
\end{tikzpicture}
\end{minipage}
\caption{Effectivity indices for the smoothed fundamental solution and $p=2$.}
\label{figure_smooth_P1_eff}
\end{center}
\end{figure}
\begin{figure}
\begin{center}
\begin{minipage}{.45\linewidth}
\begin{tikzpicture}[scale=0.95]
\begin{axis}
[
	xlabel = {$N_{\rm dofs}$},
	ylabel = {$\eta_\ell/\|u-u_\ell\|_{H^1_\kappa(\Omega)}$},
	xmode = log,
	ymode = log,
	width = \linewidth
]

\plot[ultra thick,solid ,color=black,mark=none] table[x=dof,y expr=\thisrow{est}/\thisrow{err}] {data/fond/P2_mu_1.00_hini_008.txt};
\plot[ultra thick,dashed,color=blue ,mark=none] table[x=dof,y expr=\thisrow{est}/\thisrow{err}] {data/fond/P2_mu_1.00_hini_004.txt};
\plot[ultra thick,dotted,color=red  ,mark=none] table[x=dof,y expr=\thisrow{est}/\thisrow{err}] {data/fond/P2_mu_1.00_hini_001.txt};

\node[anchor=north east] at (rel axis cs: 0.9,0.9) {$\kappa^2 = 1$};

\end{axis}
\end{tikzpicture}
\end{minipage}
\begin{minipage}{.45\linewidth}
\begin{tikzpicture}[scale=0.95]
\begin{axis}
[
	xlabel = {$N_{\rm dofs}$},
	ylabel = {$\eta_\ell/\|u-u_\ell\|_{H^1_\kappa(\Omega)}$},
	xmode = log,
	ymode = log,
	width = \linewidth
]

\plot[ultra thick,solid ,color=black,mark=none] table[x=dof,y expr=\thisrow{est}/\thisrow{err}] {data/fond/P2_mu_0.10_hini_008.txt};
\plot[ultra thick,dashed,color=blue ,mark=none] table[x=dof,y expr=\thisrow{est}/\thisrow{err}] {data/fond/P2_mu_0.10_hini_004.txt};
\plot[ultra thick,dotted,color=red  ,mark=none] table[x=dof,y expr=\thisrow{est}/\thisrow{err}] {data/fond/P2_mu_0.10_hini_001.txt};

\node[anchor=north east] at (rel axis cs: 0.9,0.9) {$\kappa^2 = 0.1$};

\end{axis}
\end{tikzpicture}
\end{minipage}

\begin{minipage}{.45\linewidth}
\begin{tikzpicture}[scale=0.95]
\begin{axis}
[
	xlabel = {$N_{\rm dofs}$},
	ylabel = {$\eta_\ell/\|u-u_\ell\|_{H^1_\kappa(\Omega)}$},
	xmode = log,
	ymode = log,
	width = \linewidth
]

\plot[ultra thick,solid ,color=black,mark=none] table[x=dof,y expr=\thisrow{est}/\thisrow{err}] {data/fond/P2_mu_0.01_hini_008.txt};
\plot[ultra thick,dashed,color=blue ,mark=none] table[x=dof,y expr=\thisrow{est}/\thisrow{err}] {data/fond/P2_mu_0.01_hini_004.txt};
\plot[ultra thick,dotted,color=red  ,mark=none] table[x=dof,y expr=\thisrow{est}/\thisrow{err}] {data/fond/P2_mu_0.01_hini_001.txt};

\node[anchor=north east] at (rel axis cs: 0.9,0.9) {$\kappa^2 = 0.01$};

\end{axis}
\end{tikzpicture}
\end{minipage}
\begin{minipage}{.45\linewidth}
\hspace{3cm}
\begin{tikzpicture}[scale=0.95]
\draw[ultra thick,solid ,color=black,mark=none] (0,0) -- (1,0); \draw (1,0) node[anchor=west] {$h_0 = 8$};
\draw[ultra thick,dashed,color=blue ,mark=none] (0,1) -- (1,1); \draw (1,1) node[anchor=west] {$h_0 = 4$};
\draw[ultra thick,dotted,color=red  ,mark=none] (0,2) -- (1,2); \draw (1,2) node[anchor=west] {$h_0 = 1$};
\end{tikzpicture}
\end{minipage}
\caption{Effectivity indices for the smoothed fundamental solution and $p=3$.}
\label{figure_smooth_P2_eff}
\end{center}
\end{figure}

\subsection{Singular unknown solution}

Here, we consider the infinite $L$-shape domain
$\Omega = \{ x \in \mathbb{R}^2 \; | \; x_1 \geq 0 \text{ or } x_2 \geq 0 \}$.
The reaction term $\kappa^2$ takes the value $10$ if $x_2 > x_1$, and $0.1$ otherwise,
while $f$ is the indicator function of $(0,1)^2$. This is illustrated in Figure~\ref{figure_setup_sing}. 
Note that in this example $\kappa$ varies ``up to infinity'' and the boundary is unbounded,
so that employing a coupling with a boundary element method is highly non-trivial.

The solution is not smooth and unknown, so that we only present the values
of the estimator $\eta_\ell$ rather than the error itself. Convergence
histories are displayed for $p=1$ and $4$ in Figure~\ref{figure_conv_sing}.
In both cases, the initial mesh is selected with $h_0 = 1$ and the initial mesh of 
active elements consists of the $4$ triangles inside the support of $f$. We observe
optimal convergence rates as expected.

We finally display in Figure~\ref{figure_sing_mesh} meshes generated by the
adaptive algorithm. We first note that localized mesh grading occurs
exactly where expected, namely at the re-entrant corner of the boundary
for $p=1$, and additionally on the remaining corners of the support of $f$
for $p=3$, where higher-order singularities are located. We further observe that
the artificial boundary $\Gamma_H$ is pushed outward more rapidly in the lower part
of the domain than in the upper part, which is in accordance with the different values of
$\kappa$.

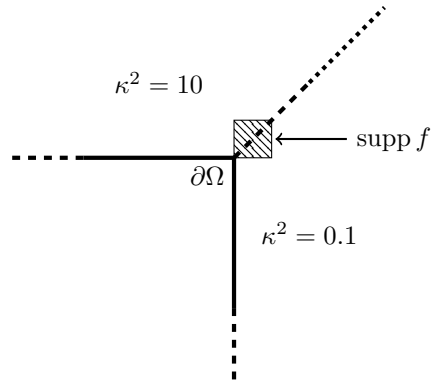
\begin{figure}
\begin{center}
\begin{tikzpicture}

\draw[pattern=north west lines] (0,0) rectangle (0.5,0.5);

\draw[ultra thick,dashed] ( 0,-2) -- ( 0,-3);
\draw[ultra thick,dashed] (-2, 0) -- (-3, 0);
\draw[ultra thick,solid ] ( 0, 0) -- ( 0,-2);
\draw[ultra thick,solid ] ( 0, 0) -- (-2, 0);
\draw[ultra thick,dashed] ( 0, 0) -- ( 1, 1);
\draw[ultra thick,dotted] ( 1, 1) -- ( 2, 2);

\draw (0,0) node[anchor=north east] {$\partial \Omega$};

\draw(-1, 1) node {$\kappa^2 = 10$};
\draw( 1,-1) node {$\kappa^2 = 0.1$};

\draw[thick,<-] (0.55,0.25) -- (1.5,0.25) node[anchor=west] {$\supp f$};
\end{tikzpicture}
\caption{Setup of experiment for the singular unknown solution.}
\label{figure_setup_sing}
\end{center}
\end{figure}

\begin{figure}
\begin{center}
\begin{minipage}{.45\linewidth}
\begin{tikzpicture}[scale=0.95]
\begin{axis}
[
	xlabel={$N_{\rm dofs}$},
	ylabel={$\eta_\ell$},
	xmode=log,
	ymode=log,
	width=\linewidth
]

\plot[ultra thick,black,solid ,mark=none] table[x=dof,y=est] {data/sing/P0.txt};

\node[anchor=south west] at (rel axis cs: 0.1,0.1) {$p = 1$};

\plot[domain=1.e4:5.e6,dashed] {2.*x^(-0.5)};
\SlopeTriangle{0.6}{-0.15}{0.2}{-0.5}{$N_{\rm dofs}^{-1/2}$}{}

\end{axis}
\end{tikzpicture}
\end{minipage}
\begin{minipage}{.45\linewidth}
\begin{tikzpicture}[scale=0.95]
\begin{axis}
[
	xlabel={$N_{\rm dofs}$},
	ylabel={$\eta_\ell$},
	xmode=log,
	ymode=log,
	width=\linewidth
]

\plot[ultra thick,black,solid ,mark=none] table[x=dof,y=est] {data/sing/P3.txt};

\node[anchor=south west] at (rel axis cs: 0.1,0.1) {$p = 4$};

\plot[domain=1.e4:5.e4,dashed] {300000*x^(-2.)};
\SlopeTriangle{0.7}{-0.1}{0.2}{-2}{$N_{\rm dofs}^{-2}$}{}

\end{axis}
\end{tikzpicture}
\end{minipage}
\caption{Convergence history for the singular unknown solution.}
\label{figure_conv_sing}
\end{center}
\end{figure}
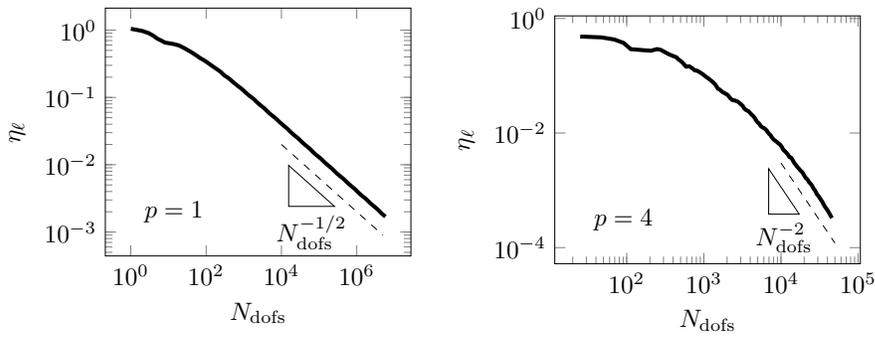

\begin{figure}
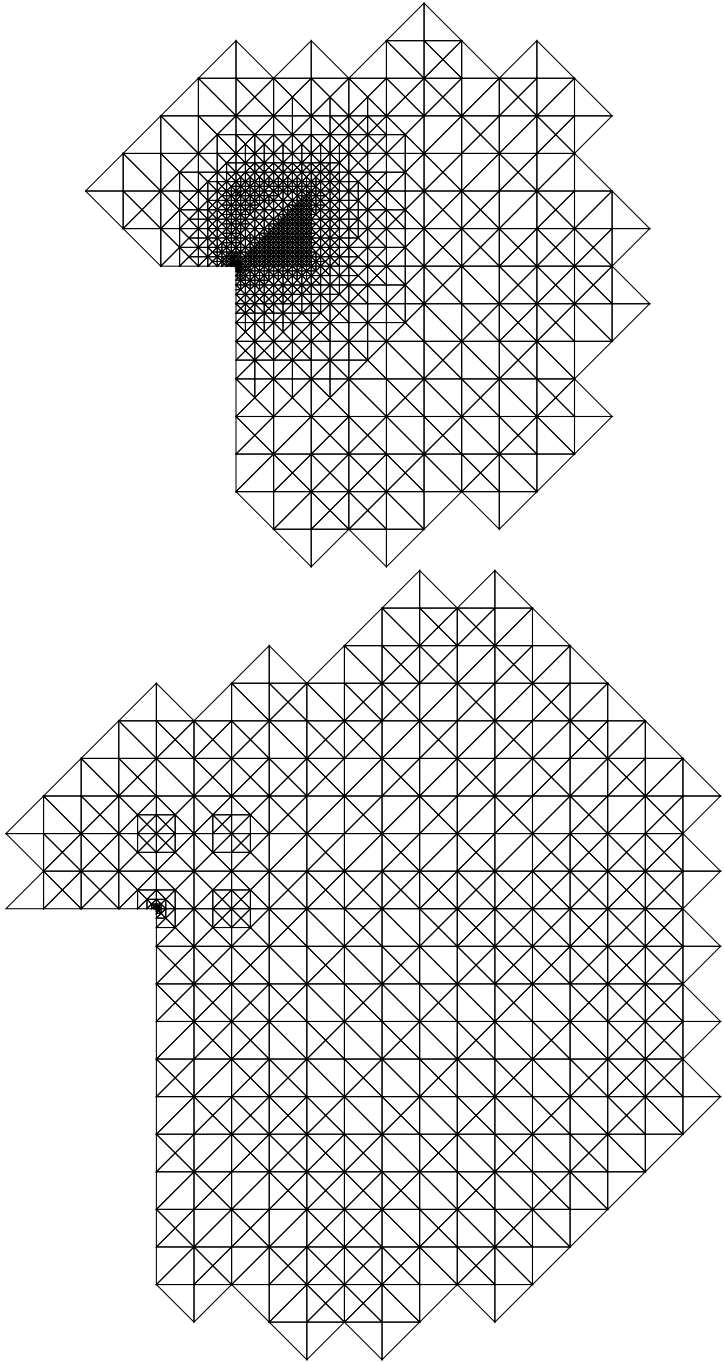

\begin{center}
\begin{tikzpicture}
\input{data/sing/mesh_P0_30}
\end{tikzpicture}
\begin{tikzpicture}
\input{data/sing/mesh_P3_50}
\end{tikzpicture}
\caption{Meshes generated for the singular unknown solution and $p=1$ at iteration
$\ell=30$ (top) and $p=4$ at iteration $\ell=50$ (bottom).}
\label{figure_sing_mesh}
\end{center}
\end{figure}

\bibliographystyle{siamplain}
\bibliography{literature}

\end{document}